\pdfoutput=1
\documentclass[final,3p,times]{elsarticle}

\usepackage{amssymb}
\usepackage{amsfonts}
\usepackage{color}
\usepackage{float}
\usepackage{alltt}
\usepackage{amsthm}
\usepackage{amsmath}
\usepackage{thmtools}
\usepackage{newtxtext,newtxmath}
\usepackage{enumerate}
\usepackage{enumitem}
\usepackage{footnote}
\usepackage{graphicx}
\usepackage{subcaption}
\usepackage{tabularx}
\usepackage{makecell}
\usepackage[colorlinks=true, allcolors=blue]{hyperref}
\usepackage[normalem]{ulem}

\newcommand{\N}{\mathbb{N}}
\newcommand{\R}{\mathbb{R}}
\newcommand{\C}{\mathbb{C}}

\DeclareMathOperator{\diag}{diag}

\newtheorem{theorem}{Theorem}[section]
\theoremstyle{definition} 

\newtheorem{lemma}[theorem]{Lemma}
\newtheorem{proposition}[theorem]{Proposition}
\newtheorem{remark}[theorem]{Remark}
\newtheorem{definition}[theorem]{Definition}
\newtheorem{corollary}[theorem]{Corollary}

\journal{}

\begin{document}

\begin{frontmatter}

\title{\textbf{On the stability of  second-order  methods for DDEs and PDDEs}}

\author[inst1]{Javier Rodr\'iguez-Guti\'errez}

\ead{javirodri@usal.es}

\affiliation[inst1]{
organization={Department of Applied Mathematics, University of Salamanca},
            city={Salamanca},
            country={Spain}}

\author[inst2]{Jesús Martín-Vaquero}

\ead{jesmarva@usal.es}

\affiliation[inst2]{
organization={Department of Applied Mathematics, IUFFyM, University of Salamanca},
            city={Salamanca},
            country={Spain}}

\begin{abstract}

In this paper, a one-parameter family of second-order implicit--explicit (IMEX) multistep methods is studied for delay differential equations (DDEs).  The analysis is based on the matrix problem $y'(t)=A y(t) + B y(t-\tau(t))$, where $A$ is Hermitian negative definite but $B$ is any matrix.  For constant, grid-aligned delays, a field-of-values argument
yields unconditional stability whenever a suitable generalized field
of values $F_p(A,B)$ is contained in the open unit disk. For a constant
non-grid-aligned delay, we show that quadratic interpolation does not
reduce the corresponding stability disk. For genuinely variable
delays, the method is formulated as a nonautonomous augmented system,
and product-level sufficient conditions are obtained from a common
quadratic Lyapunov inequality and from a quantitative slow-variation
hypothesis. 
 The approach developed herein is illustrated through several examples in which the discussed theory is applied not only to DDEs, but also to semidiscrete parabolic problems given by partial delay differential equations with a diffusion term.

\end{abstract}

\begin{keyword}
Delay differential equations \sep  Field of values    \sep    Numerical stability \sep  Partial delay differential equations     \sep  Unconditional stability  \sep Variable delays \sep 

\MSC[2020] 65L06 \sep 65L20 \sep 65M12 \sep 65M20 \sep 65M22

\end{keyword}

\end{frontmatter}

\section{Introduction}
\label{sec:1}

Many dynamical processes do not react instantaneously to their present state. Transport, maturation, sensing, communication, and decision making introduce finite time lags, so that the rate of change at time $t$ depends on information generated at earlier times. Even a single delay may change the qualitative behaviour of a model by creating or suppressing oscillations, moving stability boundaries, or producing bifurcations that are absent from the corresponding ordinary differential equation. Delay differential equations (DDEs) therefore provide a natural framework for feedback systems with memory, with applications ranging from population dynamics and epidemiology to physiology, chemical kinetics, control, and networked engineering systems; see, for example, \cite{HaleVerduynLunel1993,HartungKrisztinWaltherWu2006}. State-dependent delays are especially relevant when the travel, processing, or maturation time is itself affected by the evolving state, and they lead to a substantially richer dynamical theory than constant-delay models.

A DDE initial-value problem is infinite-dimensional, since its solution is determined by an initial history function rather than by a single vector. This distinction has direct numerical consequences. A time-stepping method must repeatedly evaluate an already computed approximation at delayed arguments, track propagated discontinuities, and provide a continuous extension or interpolation whenever a delayed point does not coincide with the mesh. These issues are central to the convergence and implementation theory of DDE solvers \cite{bellen-zennaro-DDE}. They become more delicate for time- or state-dependent delays: the delayed point moves relative to the grid, may cross mesh intervals as the solution evolves, and can make an otherwise explicit formula effectively implicit. Practical solvers for such problems consequently rely on continuous Runge--Kutta extensions, event handling, and residual control \cite{Shampine2005}.

In this work we consider the matrix problem
\begin{equation}
  y'(t)=A y(t)+B y\!\left(t-\tau(t,y(t))\right),
  \label{eq:intro-matrix-dde}
\end{equation}
where $A$ and $B$ are not assumed to be simultaneously diagonalizable. Although the dependence on the present and delayed states is linear, a state-dependent argument makes the complete functional equation nonlinear; the linear matrix structure nevertheless remains the natural core for local, frozen-delay, and numerical stability analyses. For a uniform mesh with step size $h$, let $\tau_n=\tau(t_n,y(t_n))$ and write
\[
  \tau_n=(m_n+u_n)h,
  \qquad m_n=\left\lfloor\frac{\tau_n}{h}\right\rfloor,
  \quad 0\leq u_n<1.
\]
Under the no-overlap assumption $\tau_n\ge h$, we have $m_n\ge1$ and
$t_n-\tau_n=t_{n-m_n}-u_n h$.  The classical grid-aligned setting corresponds to $u_n=0$, whereas a variable delay generally produces changing integers $m_n$ and nonzero fractional offsets $u_n$. Thus, interpolation is not merely an implementation detail: its weights enter the discrete recurrence and can alter the numerical stability mechanism.

Most of the classical numerical stability theory was developed from the scalar constant-delay test equation
\begin{equation}
  y'(t)=\lambda y(t)+\gamma y(t-\tau),
  \qquad \lambda,\gamma \in\mathbb{C},\quad \tau>0,
  \label{eq:intro-scalar-test}
\end{equation}
usually together with the constrained-mesh relation $\tau=mh$. Early work introduced delay analogues of $A$-stability and studied highly stable multistep formulas, backward differentiation methods, and integrator--interpolator pairs \cite{Cryer1974,Barwell1975,Bickart1982}. 

It is useful to distinguish two independent classifications used in this literature: delay-independent versus delay-dependent stability describes whether a stability statement is uniform with respect to the delay or depends explicitly on it, while unconditional versus conditional numerical stability describes whether stability holds for every admissible $h>0$ or only under a step-size restriction.

For one-step schemes, a substantial theory is available. P-stability of natural Runge--Kutta processes on constrained meshes was investigated in \cite{Zennaro1986}; necessary or sufficient step-size conditions for $\theta$-methods were obtained in \cite{CalvoGrande1989}; and a general framework for step-by-step methods with complex coefficients was developed in \cite{InTHoutSpijker1991}. Since the delayed value is normally off-grid, the choice of continuous extension is inseparable from the stability question, as illustrated by the interpolation procedure of \cite{InTHout1992}. System-level results under suitable matrix or operator assumptions were established for natural Runge--Kutta methods in \cite{Koto1994}. Later studies refined the distinction between delay-independent and delay-dependent regions for Runge--Kutta methods \cite{Guglielmi1997} and identified important barriers to unconditional stability for broad classes of implicit schemes \cite{GuglielmiHairer2001}. Nevertheless, much of this theory remains scalar, assumes a constant delay, or reaches systems only through structural hypotheses that reduce the analysis to scalar modes.

The multistep literature follows a similar progression. Delay-dependent stability was analysed for fixed and distributed delays in \cite{HuangVandewalle2004}, while scalar stability regions for implicit--explicit linear multistep formulas were studied in \cite{Koto-Stability}. General-linear-method criteria were subsequently specialized to linear multistep and multistep--multistage schemes in \cite{HuangHuTian2009}. Of particular relevance to non-grid-aligned delays, Huang and Vandewalle proved unconditional stability for specific fully discrete parabolic delay problems on nonconstrained meshes, using linear interpolation together with the trapezoidal rule or BDF2 \cite{HuangVandewalle2012}. These results demonstrate that a fractional delay offset can be incorporated without destroying stability, but they do not provide a general multistep theory for a noncommuting matrix pair $(A,B)$ when the offset varies from step to step.

Nonconstant delays have also been treated from several complementary viewpoints. Zennaro studied asymptotic contractivity for continuous Runge--Kutta methods applied to nonlinear DDEs with time-dependent delays \cite{Zennaro1997}. Zhao, Liu, and Qiu analysed three $\theta$-type discretizations for equations with continuous matrix coefficients and a continuous variable lag \cite{ZhaoLiuQiu1999}. These contributions are fundamental, but their principal aims are contractivity, particular one-step formulas, special variable-delay structures, or reliable error control. For the two-step family studied here, it remains useful to
combine noncommuting matrix coefficients, off-grid quadratic
reconstruction, and product-level conditions for a genuinely variable
delay within one analysis.

Recent work has revisited constant-delay matrix equations without a simultaneous-diagonalization hypothesis. Field-of-values arguments have yielded conditional and unconditional stability conditions for $\theta$-methods \cite{Art-Alejandro} and for IMEX--BDF formulas \cite{articulo_ana}. The latter analysis introduces a fractional-offset representation, but subsequently specializes to the aligned case $u=0$. The present work extends this matrix-level program to a family of second-order multistep methods while retaining
the fractional position of the delayed argument.

Three different stability regimes must be distinguished. First, for a
constant grid-aligned delay, we derive an unconditional stability result
by combining scalar boundary-locus analysis with a generalized
field-of-values argument. Second, for a constant non-grid-aligned delay,
we prove that quadratic interpolation does not reduce the scalar
stability disk. Third, for a genuinely time- or
state-dependent delay, the integer delay index and fractional offset
may change from one step to the next. The resulting recurrence is
nonautonomous, and stability is governed by products of step matrices
rather than by the eigenvalues of each frozen matrix separately. Thus,
frozen-delay stability must be supplemented by a product-level
condition, such as a common quadratic Lyapunov inequality.

Therefore, the outline of the paper is as follows: in \S \ref{sec:2} the general framework for consistency and stability of multistep methods is introduced in the scalar and matrix cases, using the concept of field of values and its properties. 
In \S \ref{s3:new-family}  the
new second-order family is derived, and general conditions to obtain unconditional stability  when $u=0$ are obtained.

  Later, Section~\ref{s4:non-constant-delay} first considers constant
non-grid-aligned delays and then discusses genuinely variable and
state-dependent delays. The examples in that section verify the frozen
stability hypotheses and illustrate second-order convergence.   
    Finally, in \S \ref{sec:numer-simul}, some numerical simulations in DDEs and PDDEs give a good idea of the power of the theory developed.

\section{Theoretical Framework and Stability Analysis }
\label{sec:2}
First of all, consider a delay differential equation (DDE) with the form:
\begin{equation}
    \left\{\begin{array}{lcc}
            y'(t)=f(t,y(t))+g(t,y(t),y(t-\tau)), & t \ge t_0, \\
            y(t)=\phi(t), & t\le t_0,
        \end{array}
        \right.
        \label{DDE_general}
\end{equation}
where $y:[t_0-\tau,\infty) \rightarrow \mathbb{R}^d$ is a $\mathcal{C}^3([t_0-\tau,\infty))$ function called the state variable, $f:[t_0,\infty) \times \mathbb{R}^d \rightarrow \mathbb{R}^d$, $g:[t_0,\infty) \times \mathbb{R}^d  \times \mathbb{R}^d \rightarrow \mathbb{R}^d$ and $\phi:[t_0-\tau,t_0]\rightarrow \mathbb{R}^d$  is the prescribed history.  Whenever an order statement is made, the solution and the data are
assumed to be sufficiently smooth and to satisfy the required
compatibility conditions at breaking points.

For the moment, assume that $\tau=mh$ with $m\in\N$, $m\ge1$, so that
all delayed arguments lie on the uniform mesh $t_n=t_0+nh$.  We define a k-step linear multistep implicit-explicit (IMEX) method (with $k\ge 1$) for \eqref{DDE_general} as
\begin{equation}
    \sum\limits_{j=0}^k \alpha_j y_{n+j}=h \sum\limits_{j=0}^k \beta_j f(t_{n+j},y_{n+j})+h \sum\limits_{j=0}^{k}\beta^*_j g(t_{n+j},y_{n+j},y_{n-m+j}).
    \label{IMEX_general}
\end{equation}
Here $h=\tau/m$, with $m\in\mathbb{N}$ and $m\ge1$; $y_j\approx y(t_j)=y(t_0+jh)$ denotes the approximation of the state variable in the j-th step; and $\{\alpha_j,\beta_j,\beta_j^*\}_j$ is the set of coefficients of the method. The following notation will be used throughout the paper: $f_i=f(t_i,y_i)\approx f(t_0+ih,y(t_0+ih))$, $g_i=g(t_i,y_i,y_{i-m})\approx g(t_0+ih,y(t_0+ih),y(t_0+(i-m)h))$.

The aforementioned coefficients $\alpha_j,\beta_j$ will be determined via imposing convergence conditions on the implicit method (without the delay term), whereas the $\beta_j^*$ will be deduced by symmetry among coefficients.

\begin{definition}
 The IMEX linear multistep method \eqref{IMEX_general} is said to be consistent if the local truncation error, whose definition is
 \begin{equation}
     \begin{aligned}
     (T_h)_n={}&\frac{1}{h}\sum_{j=0}^k \alpha_j y(t_{n+j})
     -\sum_{j=0}^k \beta_j f(t_{n+j},y(t_{n+j}))\\
     &-\sum_{j=0}^k\beta_j^*
     g(t_{n+j},y(t_{n+j}),y(t_{n+j-m}))
     \end{aligned}
     \label{local_truncation_error}
 \end{equation}
 where $y(t_{n+j})$ represents the exact solution of \eqref{DDE_general} at time $t_{n+j}$, satisfies
 \begin{equation}
     \|T_h\|_\infty \xrightarrow[]{h\rightarrow 0}0.
 \end{equation}
\end{definition}

\begin{definition}
    The linear multistep IMEX method \eqref{IMEX_general} is consistent of order $p$ if
    \begin{equation}
        \|T_h\|_\infty=O(h^p).
    \end{equation}
\end{definition}

\subsection{Linear stability in a scalar DDE}

To motivate the choice of the $\beta_j^*$ coefficients, let us consider the simplest case of scalar DDE:
\begin{equation}
    \left\{
    \begin{array}{lcc}
       y'(t)=\lambda y(t)+\gamma y(t-\tau)=\lambda(y(t)-\mu y(t-\tau))  & t\ge t_0 \\
       y(t)=\phi(t)  & t\le t_0,
    \end{array}
    \right.
    \label{Scalar_DDE}
\end{equation}
where $\lambda <0$ is a real number, $|\gamma|< - \lambda$, $\mu=-\frac{\gamma}{\lambda}$ and $\tau=mh > 0$ is a scalar. Applying the described method \eqref{IMEX_general} to this problem leads us to

\begin{equation}
    \sum\limits_{j=0}^k \alpha_j y_{n+j}=z\sum\limits_{j=0}^k \beta_j y_{n+j}+w\sum\limits_{j=0}^k \beta_j^* y_{n+j-m},
    \label{method_scalar_case}
\end{equation}
having defined $z=\lambda h$ and $w=\gamma h=-z\mu$.

The stability analysis in this case just involves controlling the roots of the characteristic equation. We define the polynomials

\begin{equation}
    \rho(\zeta)=\sum\limits_{j=0}^k \alpha_j \zeta^j, \quad \sigma(\zeta)=\sum\limits_{j=0}^k \beta_j \zeta^j, \quad \sigma^*(\zeta)=\sum\limits_{j=0}^k \beta_j^* \zeta^j.
    \label{polynomials_characteristic_equation}
\end{equation}

For the stiff limit, $z=-\infty$ is not substituted directly into
\eqref{char_equation}. Instead, after division by $-z$, we define
\begin{equation}
  P_{m,\infty}(\mu,\zeta)
  :=\zeta^m\sigma(\zeta)-\mu\sigma^*(\zeta).
  \label{eq:stiff-limit-characteristic}
\end{equation}

The characteristic equation turns out to be
\begin{equation}
    P(\mu,z,\zeta)=\zeta^m (\rho(\zeta)-z\sigma(\zeta))+z\mu \sigma^*(\zeta)=0.
    \label{char_equation}
\end{equation}

Throughout, $D(c,r):=\{w\in\C:|w-c|<r\}$ denotes an open disk.

\begin{definition}
 For fixed $m\ge1$, $z\in \mathbb{R}_{<0}\cup\{-\infty\}$, and
 $\mu\in\mathbb C$,   the characteristic equation \eqref{char_equation} is said to be stable if every root satisfies $|\zeta|<1$ for that particular $z\in \mathbb{R}_{<0}\cup\{-\infty\}$ (using
 \eqref{eq:stiff-limit-characteristic} when $z=-\infty$).
\end{definition}

\begin{definition}
    The P-stability region of \eqref{method_scalar_case} is defined as
    \begin{equation*}
        D_P=\bigcap\limits_{m\ge 1}D_P^m,\quad D_P^m=\{(z,\mu)\in \mathbb{C}^2 \;/\; \eqref{char_equation} \text{ is stable}\}.
    \end{equation*}
\end{definition}
\begin{definition}
    For fixed $m\in \mathbb{N}$ and $z\in \mathbb{R}_{<0}\cup\{-\infty\}$, we define the stability region $D_z$ as
    \begin{equation*}
        D_z=\{\mu\in \mathbb{C}\;/\; \eqref{char_equation} \text{ is stable}\},
    \end{equation*}
    and its boundary will be denoted by
    \begin{equation}
        \Gamma_z=\left\{\mu=-\dfrac{\zeta^m(\rho(\zeta)-z\sigma(\zeta))}{z\sigma^*(\zeta)}\in \mathbb{C}\;/\;\zeta=e^{i\theta},0\le \theta \le 2\pi\right\}.
        \label{Gamma_z}
    \end{equation}
\end{definition}

\begin{definition}
    We denote the infimum of the modulus of all the points in $\Gamma_z$ by $\sigma_z$.
\end{definition}

\begin{definition}
    Given the Cauchy problem
    \begin{equation}
             \left\{
        \begin{array}{lcc}
             y'(t)=\lambda y(t),& t\ge t_0, \\
             y(t_0)=y_0, 
        \end{array}
        \right.
        \label{A-stability_problem}
    \end{equation}
    being $\lambda \in \mathbb{C}$, and set $z=h\lambda$. If a method applied to the problem can be expressed as $y_{n+1}=R(h_{n+1}\lambda)y_n$, with $R(z)$ the so-called stability function, the absolute stability region for \eqref{A-stability_problem} is defined as
    \begin{equation}
        S_A=\{z\in\mathbb{C}\;/\; |R(z)|<1\}.
    \end{equation}
    The formula is $A$-stable if
    $\{z\in\mathbb C:\Re z\le0\}\subseteq S_A$.
\end{definition}

If we denote by $D(0,\sigma_{z_j})$ the disk centered at the origin with radius $\sigma_{z_j}\in \mathbb{R}_{>0}\cup \{\infty\}$, and the method \eqref{method_scalar_case} verifies $[-\infty,0)\subset S_A$, then $D(0,\sigma_{z_j})\subseteq D_{z_j}$ $\forall z_j \in \mathbb{R}_{<0}\cup \{-\infty\}$.

The following Lemma allows us to characterize the P-stability region:
\begin{lemma}[\cite{Koto-Stability}, Theorem 1]
    Suppose the vectors formed by the coefficients related to the implicit part of \eqref{method_scalar_case}, $(\alpha_0,\dots,\alpha_k)$ and $(\beta_0,\dots,\beta_k)$, are linearly independent. Consider the following statements:
    \begin{enumerate}[label=(\alph*)]
  \item $z\in S_A$, $|\mu|<\sigma_z$. 
  \item $(z,\mu)\in D_P$.
  \item $z\in S_A$, $|\mu|\le \sigma_z$.
\end{enumerate}
Then (a) $\Rightarrow$ (b) $\Rightarrow$ (c).
\end{lemma}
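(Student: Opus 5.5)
The plan is a boundary-locus and continuity argument in the variable $\mu$, with $z$ and $m$ fixed. The roots of \eqref{char_equation} depend continuously on $\mu$. The leading coefficient $\alpha_k - z\beta_k$ of $\zeta^m(\rho-z\sigma)$ stays nonzero when $z\in S_A$ (for $z=-\infty$ we use $P_{m,\infty}$ and need $\beta_k\neq0$). So a root can leave the open unit disk only by crossing $|\zeta|=1$. A crossing at $\zeta=e^{i\theta}$ happens exactly when $\mu$ equals the boundary-locus value in \eqref{Gamma_z}. The case $\sigma^*(e^{i\theta})=0$ needs separate treatment; there the equation reduces to $\rho-z\sigma=0$ on the circle, which $z\in S_A$ excludes. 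Note that $\mu$ here is a complex parameter with $z$ fixed, consistent with $D_z$.

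\textbf{(a) $\Rightarrow$ (b).} Fix $m\ge1$ and take $z\in S_A$ with $|\mu|<\sigma_z$.
\begin{enumerate}[label=(\roman*)]
\item At $\mu=0$ the polynomial factors as $\zeta^m(\rho(\zeta)-z\sigma(\zeta))$. The root $\zeta=0$ has modulus $0<1$. The remaining roots are those of $\rho-z\sigma$, which lie strictly inside the unit disk because $z\in S_A$.
\item Deform along the segment $s\mu$, $s\in[0,1]$. Every point on it satisfies $|s\mu|<\sigma_z\le|\nu|$ for all $\nu\in\Gamma_z$, so no parameter value lies on $\Gamma_z$.
\item Hence no root ever reaches $|\zeta|=1$, and by continuity of the roots all of them stay strictly inside the unit disk.
\end{enumerate}
Since $m$ was arbitrary, $(z,\mu)\in D_P^m$ for every $m$, that is, $(z,\mu)\in D_P$. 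The linear independence hypothesis on $(\alpha_j)$ and $(\beta_j)$ ensures that $\rho-z\sigma$ is not identically zero and that its degree is constant in $z$. I would also check that the set of $z$ at which the degree drops is handled consistently with the definition of $S_A$ for multistep methods.

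\textbf{(b) $\Rightarrow$ (c).} Suppose $(z,\mu)\in D_P$.
\begin{enumerate}[label=(\roman*)]
\item To get $z\in S_A$, use that $D_P$ is defined by strict inequalities and root locations vary continuously, so a neighbourhood of $(z,\mu)$ is also stable. I would then argue that stability of $\zeta^m(\rho-z\sigma)+z\mu\sigma^*$ for all $m$ forces $\rho-z\sigma$ to have its roots in the closed disk. The reason is that as $m\to\infty$ the term $\zeta^m$ is small inside the disk and large outside, so by Rouché a root $\zeta_0$ of $\rho-z\sigma$ with $|\zeta_0|>1$ would persist for large $m$. Upgrading this to strict inequality is delicate. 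The cleaner route is to use the version of $S_A$ intended by Koto's theorem, namely the set of $z$ for which the roots of $\rho-z\sigma$ satisfy the root condition.
\item To show $|\mu|\le\sigma_z$, argue by contradiction and assume $|\mu|>\sigma_z$. Choose $\theta$ with $|\mu_m(\theta)|$ close to $\sigma_z$. Here the dependence on $m$ enters, because $\mu_m(\theta)=-e^{im\theta}(\rho-z\sigma)(e^{i\theta})/(z\sigma^*(e^{i\theta}))$. By varying $m$, the phase $e^{im\theta}$ can be placed arbitrarily close to any argument. For some $m$ this puts a point of the locus essentially on the ray through $\mu$ and strictly inside $|\mu|$. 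Combined with the argument principle, this shows that some root of \eqref{char_equation} has $|\zeta|\ge1$ for that $m$, contradicting $(z,\mu)\in D_P^m$.
\end{enumerate}

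The main obstacle is this last step: turning "the locus passes between $0$ and $\mu$" into "$\mu$ lies in an unstable component". I would count roots inside $|\zeta|<1$ along the segment $s\mu$ using the winding number of $P(s\mu,z,\cdot)$ on the unit circle, showing that the count drops at the first crossing. A possible gap is a tangential crossing that does not change the count. I would handle it by perturbing $\theta$ and $m$ slightly, which is possible because only a nondegenerate crossing is needed, and such crossings are generic in $\theta$.
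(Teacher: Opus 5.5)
The paper gives no proof of this lemma; it is quoted from Koto's Theorem~1, so your argument has to stand on its own. Your (a)$\Rightarrow$(b) is sound. On $|\zeta|=1$ one has $|\mu_m(\theta)|=|\rho-z\sigma|/|z\sigma^*|$, so $\sigma_z$ does not depend on $m$ and no point $s\mu$ meets $\Gamma_z$. Zeros of $\sigma^*$ on the circle are harmless because $\rho-z\sigma\neq0$ there, and the count of $m+k$ interior roots is preserved.

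The gaps are in (b)$\Rightarrow$(c), and there are two.
\begin{enumerate}[label=(\roman*)]
\item \emph{Strict $z\in S_A$ is not established.} $D_P=\bigcap_m D_P^m$ is an infinite intersection, so it need not be open, and your ``neighbourhood'' remark is unjustified. Your Rouch\'e argument as $m\to\infty$ only excludes roots of $Q:=\rho-z\sigma$ with $|\zeta_0|>1$. Switching to a root-condition definition of $S_A$ changes the statement, and would still require proving simplicity. This also undermines part (ii): your counting along $[0,\mu]$ starts from $\mu=0$, which is a stable configuration only if $Q$ is already known to be Schur.
\item \emph{The crossing count does not give the contradiction.} The step you flag fails for a reason other than tangency. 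The locus $\mu_m(\theta)=e^{im\theta}f(\theta)$, with $f=-Q/(z\sigma^*)$, winds roughly $m$ times around $0$. So for the large $m$ your phase-tuning needs, the segment $[0,\mu]$ crosses $\Gamma_z$ many times, and a drop at the \emph{first} crossing does not bound the count at $\mu$. Differentiating $\zeta^mQ+s\,z\mu\sigma^*=0$ shows that a root crossing at $e^{i\theta}$ moves outward iff $m+\frac{d}{d\theta}\arg\bigl(Q/\sigma^*\bigr)(e^{i\theta})>0$, i.e.\ iff the locus is traversed counterclockwise there. Without establishing this sign at every crossing, you never reach the contradiction.
\end{enumerate}

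The missing idea is a local large-$m$ argument, which needs neither phase tuning nor counting along a segment.
\begin{enumerate}[label=(\alph*)]
\item \emph{Bound on $|\mu|$.} Suppose $|\mu|>\sigma_z$. Choose $\theta_1$ with $Q(e^{i\theta_1})\neq0$ and $|g(e^{i\theta_1})|>1$, where $g:=-z\mu\sigma^*/Q$. On a neighbourhood $U$ of $i\theta_1$, fix an analytic branch of $L(s):=\log g(e^{s})$ with $|L|\le C_0$ and $\operatorname{Re}L\ge\delta>0$.
\item \emph{Rouch\'e on a small disc.} For each large $m$, pick $j$ with $|2\pi j/m-\theta_1|\le\pi/m$. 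Apply Rouch\'e's theorem on the disc $|s-2\pi i j/m|<C/m$, with $C>C_0$, to $ms-2\pi i j-L(s)$ against $ms-2\pi i j$. This gives a zero $s^*$, and $\zeta^*=e^{s^*}$ satisfies $(\zeta^*)^mQ(\zeta^*)+z\mu\sigma^*(\zeta^*)=0$ with $|\zeta^*|=e^{\operatorname{Re}L(s^*)/m}>1$. This contradicts $(z,\mu)\in D_P^m$ for all large $m$.
\item \emph{Strict $z\in S_A$.} Let $\zeta_0$ be a zero of $Q$ on the unit circle. If $z\mu\sigma^*(\zeta_0)\neq0$, then $|g|\to\infty$ near $\zeta_0$ and the same argument applies. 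Otherwise $\zeta_0$ is a root for every $m$. Combined with your Rouch\'e argument for $|\zeta_0|>1$, this yields $z\in S_A$ in the strict sense without redefining $S_A$.
\end{enumerate}
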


\begin{corollary}
    Given an $A$-stable method with a stable stiff limit, for any $z\in \mathbb{R}_{<0}\cup \{-\infty\}$ we have $D(0,\sigma_z)\subseteq D_z$, with $D(0,\sigma_z)$ being the disk centered in the origin with radius $\sigma_z$.
\end{corollary}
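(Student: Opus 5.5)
The plan is to derive the corollary directly from the implication (a) $\Rightarrow$ (b) of the preceding Lemma (Koto's Theorem~1), applied for each fixed $m$, and to handle the stiff limit $z=-\infty$ separately using the reduced polynomial \eqref{eq:stiff-limit-characteristic}. Fix $z\in\mathbb{R}_{<0}$ first. Since the method is $A$-stable, $\{z\in\mathbb C:\Re z\le 0\}\subseteq S_A$, so in particular $z\in S_A$. For any $\mu$ with $|\mu|<\sigma_z$, hypothesis (a) of the Lemma holds. Hence $(z,\mu)\in D_P=\bigcap_m D_P^m$, so $(z,\mu)\in D_P^m$ for the fixed $m$, i.e.\ $\mu\in D_z$. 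This gives $D(0,\sigma_z)\subseteq D_z$ for finite $z$. I take the linear-independence hypothesis of the Lemma as implicit in ``method'': otherwise $\rho$ and $\sigma$ are proportional and the implicit part is degenerate.

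For completeness, I would also sketch why (a) gives stability, since the same mechanism is needed at $z=-\infty$. Use a continuity/argument-principle argument in $\mu$. At $\mu=0$ the roots of $P(0,z,\zeta)=\zeta^m(\rho(\zeta)-z\sigma(\zeta))$ are $\zeta=0$ and the roots of $\rho-z\sigma$. The latter lie in $|\zeta|<1$ because $z\in S_A$. As $\mu$ moves along the segment from $0$ to a point with $|\mu|<\sigma_z$, the roots depend continuously on $\mu$. A root can reach $|\zeta|=1$ only if $\mu$ lies on $\Gamma_z$ in \eqref{Gamma_z}, which requires $|\mu|\ge\sigma_z$. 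The degree of $P$ in $\zeta$ stays constant because the leading coefficient $\alpha_k-z\beta_k\neq0$, so no root escapes to infinity. Hence all roots remain in the open unit disk.

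For $z=-\infty$, I would rerun the same homotopy with $P_{m,\infty}(\mu,\zeta)=\zeta^m\sigma(\zeta)-\mu\sigma^*(\zeta)$. Here $\sigma_{-\infty}$ is the infimum of $|\zeta^m\sigma(\zeta)/\sigma^*(\zeta)|$ on $|\zeta|=1$, which is the limit form of $\Gamma_z$ after dividing by $-z$. At $\mu=0$ the roots are $0$ and the roots of $\sigma$. The hypothesis ``stable stiff limit'' means exactly that $\sigma$ has all its roots in $|\zeta|<1$, so the starting configuration is stable. The main obstacle is this endpoint: making sure the degree of $P_{m,\infty}$ in $\zeta$ does not drop along the path. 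This requires $\beta_k\neq0$, or else $m\ge1$ together with a degree comparison against $\sigma^*$. If $\beta_k=0$ I would instead argue that the rational function $\mu\mapsto$ roots is still continuous on $|\mu|<\sigma_{-\infty}$ by counting zeros inside the unit circle with Rouché's theorem, comparing $\zeta^m\sigma$ and $\mu\sigma^*$ on $|\zeta|=1$. There $|\mu\sigma^*|<|\zeta^m\sigma|$ by the definition of $\sigma_{-\infty}$, so both have the same number of zeros inside, namely $m+\deg\sigma$. That count equals the total degree, so no zeros lie outside. This Rouché formulation in fact also handles the finite-$z$ case uniformly, and I would present it that way.
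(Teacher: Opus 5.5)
Your proposal is correct. The paper gives no separate proof of this corollary. It is read off from (a)$\Rightarrow$(b) in Koto's lemma: $A$-stability gives $z\in S_A$ for every $z<0$, and the stable-stiff-limit hypothesis is simply used to admit $z=-\infty$ (the sentence before the lemma assumes $[-\infty,0)\subset S_A$). Your first paragraph is exactly that derivation.

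What you add is a self-contained Rouch\'e argument on $|\zeta|=1$. You compare $\zeta^m(\rho-z\sigma)$ with $z\mu\sigma^*$, and at $z=-\infty$ you compare $\zeta^m\sigma$ with $\mu\sigma^*$. This avoids the linear-independence hypothesis and any continuity-of-roots argument. It also handles the stiff limit explicitly through $P_{m,\infty}$, which the lemma as stated here does not. It is the same mechanism the paper itself uses later in the proof of Theorem~\ref{thm:fixed-off-grid-stability}.

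Two points would make it airtight. First, at points of the circle where $\sigma^*(\zeta)=0$, the strict inequality needs $\rho(\zeta)-z\sigma(\zeta)\neq0$ (from $z\in S_A$), respectively $\sigma(\zeta)\neq0$ (from $\sigma$ being Schur). Second, the interior root count $m+k$ equals the degree only because $\beta_k\neq0$ and $\alpha_k-z\beta_k\neq0$ for $z<0$; both are forced by $A$-stability, since otherwise a root escapes to infinity.

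Your $\beta_k=0$ fallback should be dropped. In that case the count $m+\deg\sigma$ need not equal the degree of $\zeta^m\sigma-\mu\sigma^*$ (take $\deg\sigma^*>m+\deg\sigma$), but the case cannot occur for an $A$-stable formula anyway.
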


\begin{definition}
    We will call the region of unconditional stability to
    \begin{equation}
        D=\bigcap\limits_{z\in\mathbb{R}_{<0}\cup\{-\infty\}}\bigcap\limits_{m\ge 1} D_z,
         \label{eq:unconditional-region}
    \end{equation}
    which is the set of $\mu\in \mathbb{C}$ such that \eqref{method_scalar_case} is stable for every $z\in\mathbb R_{<0}\cup\{-\infty\}$ and every $m\ge1$.
\end{definition}

\begin{remark}
    Our focus is to deal with problems that are asymptotically stable, which in the scalar case translates to $|\mu|<1$ (see \cite[Chapter~9]{bellen-zennaro-DDE}, Chapter 9). Hence our goal is to design a method verifying $D(0,1)\subseteq D$, this is, an unconditionally stable method.
\end{remark}

\subsection{Linear stability in the general case}
Consider we have the matrix-based problem
\begin{equation}
    \left\{
    \begin{aligned}
         y'(t)=&Ay(t)+By(t-\tau), & t\ge& t_0, \\
         y(t)=&\phi(t), & t\le& t_0,
    \end{aligned}
    \right.
    \label{prob_matricial_general}
\end{equation}
with $A,B\in \mathbb{C}^{d \times d}$. Assume $A$ is Hermitian negative definite, that is, it coincides with its conjugate transpose $A^*=A$ and $\langle x,Ax\rangle<0$ $\forall x\not= 0$, $x\in \mathbb{C}^d$.

\begin{remark}
    In the particular case in which $A$ and $B$ are simultaneously diagonalizable, i.e., $A=Q\Lambda Q^{-1}$, $B=Q\Gamma Q^{-1}$ with $\Lambda=\diag(\lambda_i)$, $\Gamma=\diag(\gamma_i)$, the diagonal matrices whose non-zero elements are the eigenvalues of A and B, respectively.
    
    Taking $y=Qu$, the problem can be rewritten as
    \begin{equation}
        u'(t)=\Lambda u(t)+\Gamma u(t-\tau),
    \end{equation}
    and therefore can be decoupled into a system of d independent equations $u_i'(t)=\lambda_i u_i(t)+\gamma_i u_i(t-\tau)$.
    
    Finally, we can apply the theory developed for the scalar case \eqref{Scalar_DDE} to each equation.
\end{remark}

Let us assume, then, that $A$ and $B$ do not diagonalize simultaneously. We will make use of the concept called the \textit{field of values} of a matrix and develop a procedure to obtain stability conditions for \eqref{prob_matricial_general} (see \cite{LMM-Seib-Ros} for its application to a non-delayed IMEX scheme).
\begin{definition}
    We define the field of values (FOV) of a matrix $X\in \mathbb{C}^{d\times d}$ as
    \begin{equation*}
        F(X):=\left\{ \langle x,Xx\rangle : ||x||^2=1,\; x\in \mathbb{C}^d \right\}.
    \end{equation*}
\end{definition}

For a fixed $p\in\R$, let us define the generalized field of values as:
\begin{equation*}
    F_p\equiv F_p(A,B):=\left\{ \langle v,(-A)^{p-1}Bv\rangle:\langle v,(-A)^p v\rangle=1,\; v\in \mathbb{C}^d, \; p\in \mathbb{R} \right\}.
\end{equation*}

Via a change of variable $v=(-A)^{-\frac{p}{2}}x$, we have that
\begin{equation}
    \begin{aligned}
        &\langle v,(-A)^{p-1}Bv \rangle = \langle (-A)^{-\frac{p}{2}}x,(-A)^{p-1}B(-A)^{-\frac{p}{2}}x\rangle=((-A)^{-\frac{p}{2}}x)^*(-A)^{p-1}B(-A)^{-\frac{p}{2}}x
        \\&=x^* ((-A)^{-\frac{p}{2}})^* (-A)^{p-1}B (-A)^{-\frac{p}{2}}x\overset{\begin{array}{cc}
             \footnotesize\text{A Hermitian}  \\
             \downarrow 
        \end{array}}{=} x^* (-A)^{-\frac{p}{2}} (-A)^{p-1}B(-A)^{-\frac{p}{2}}x
        \\&=x^* (-A)^{\frac{p}{2}-1}B(-A)^{-\frac{p}{2}}x=\langle x,(-A)^{\frac{p}{2}-1}B(-A)^{-\frac{p}{2}}x \rangle,
    \end{aligned}
    \label{Transformacion_v_en_x}
\end{equation}
thus, it is possible to assert the following identification
\begin{equation*}
    F_p=F((-A)^{\frac{p}{2}-1}B(-A)^{-\frac{p}{2}})=\left\{ \langle x,(-A)^{\frac{p}{2}-1}B(-A)^{-\frac{p}{2}}x \rangle \in \mathbb{C} :\; x\in \mathbb{C}^d,\; ||x||=1 \right\}.
\end{equation*}

Let us state some analytic and geometric properties of the FOV, which will be used later:

\begin{remark}
For $d\times d$ matrices, the properties and definitions used below are:
\begin{enumerate}[label=(\alph*)]
    \item $F(X)$ is convex, closed and bounded.
    \item $\sigma(X) \subseteq F(X)$.
    \item $F(X_1)+F(X_2)=\{ x_1+x_2:\; x_1 \in F(X_1),x_2 \in F(X_2) \}$.
    \item $F(X_1)F(X_2)=\{ x_1 x_2 : x_1 \in F(X_1),x_2 \in F(X_2) \}$.
    \item $F(w_1 X_1+w_2 I)=w_1 F(X_1)+w_2 $.
    \item If $X_1$ is positive semi-definite, then $\sigma(X_1 X_2)\subseteq F(X_1)F(X_2).$
    \item If $X$ is normal, $F(X) = Co(\sigma(X))$, where $Co(\cdot)$ denote the closed convex hull of a set.
    \item If $X$ is Hermitian, $F(X)$ is a segment of the real line.
\end{enumerate}
\end{remark}

Their proofs, as well as other relations, can be found in \cite{Horn-fieldofvalues}.

Before connecting all this with our scheme, let us introduce two more concepts regarding the "size" of both the field of values and the spectrum of a matrix.
\begin{definition}
    The numerical radius of a matrix $X\in \mathbb{C}^{d\times d}$ is
    \begin{equation*}
        r(X)=\max\limits_{c\in F(X)}|c|.
    \end{equation*}
\end{definition}

As a consequence, by definition the field of values of a matrix $X$ is contained within the closed disk centered at the origin of radius $r(X)$, i.e., $F(X)\subseteq D(0,r(X))$.
\begin{definition}
    We will call spectral radius of a matrix $X\in \mathbb{C}^{d\times d}$ to
    \begin{equation*}
        \rho(X)=\max\limits_{\lambda\in\sigma(X)}|\lambda|.
    \end{equation*}
\end{definition}

On this basis, by applying the method \eqref{IMEX_general} to \eqref{prob_matricial_general}, we end up with
\begin{equation}
    \sum\limits_{j=0}^k \alpha_j y_{n+j}=h\sum\limits_{j=0}^k \beta_j A y_{n+j}+h\sum\limits_{j=0}^{k}\beta_j^*B y_{n+j-m}.
    \label{Metodo.retardo.IMEX.matricial}
\end{equation}

Since $M:= \alpha_kI-h\beta_kA$ is nonsingular and following the reasoning used for the non-delayed case in \cite{LMM-Seib-Ros}, we will transform this linear recursive relation into a vectorial one:
\begin{equation*}
    Y_n=WY_{n-1},\quad \text{with }Y_n:=(y_{n+k},y_{n+k-1},\dots,y_{n-m+1})^T \in \mathbb{R}^{(k+m)\times d},
\end{equation*}
having assumed that $m\ge 3$.

The $(k+m)d\times (k+m) d$ matrix W is determined by
\begin{equation}
    W=\begin{pmatrix}
        M^{-1}C_1&M^{-1}C_2&\cdots&M^{-1}C_{k+m}\\
        I&0&\cdots&0\\
        &\ddots&\ddots&\vdots\\
        0&&I&0
    \end{pmatrix}.
    \label{matrix.W}
\end{equation}
Here $I=I_d$ and, for $r=1,\dots,k+m$,
\begin{equation*}
    C_r=h\beta_{k-r}A-\alpha_{k-r}I
        +h\beta^*_{k+m-r}B,
\end{equation*}
where $\alpha_j=\beta_j=\beta_j^*=0$ whenever
$j\notin\{0,\dots,k\}$. This convention also covers any overlap of
the current and delayed blocks.

Let V be an eigenvector of W with eigenvalue $\zeta$. Then, by observing the shape of W, for some $v\not= 0$, $v\in \mathbb{C}^d$, the eigenvector can be written as
\begin{equation*}
    V=\big(\zeta^{k+m-1}v,\zeta^{k+m-2}v,\dots,\zeta v,v\big)^T \in \mathbb{C}^{(k+m)d}.
\end{equation*}

In this sense, the characteristic equation of W can be expressed as
\begin{equation*}
    \det(W-\zeta I)=0 \iff \det\left( \zeta^m \Big( \frac{1}{h}\rho(\zeta)I-\sigma(\zeta)A \Big)-\sigma^*(\zeta)B \right)=0, 
\end{equation*}
here $\rho(\zeta),\sigma(\zeta)$ and $\sigma^*(\zeta)$ represent the polynomials \eqref{polynomials_characteristic_equation}.

Hence, for each eigenvalue $\zeta$ of W, there will exist an associated eigenvector V with the previous form, and whose $v\in\mathbb{C}^d$ will verify:
\begin{equation}
    T(\zeta)v=0,\quad \text{with } \; T(x)=x^m \left( \frac{1}{h}\rho(x)I-\sigma(x)A \right)-\sigma^*(x)B.
    \label{Ec_T(x)}
\end{equation}

The next result allows us to  characterize the stability of the method \eqref{Metodo.retardo.IMEX.matricial} via the characteristic equation of W:
\begin{proposition}[\cite{LMM-Seib-Ros}, Proposition 2]
    If the matrix $T(x)$ is nonsingular for all $|x|\ge 1$, then the method \eqref{Metodo.retardo.IMEX.matricial} is stable.
\end{proposition}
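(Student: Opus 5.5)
The plan is to reduce the stability of the recurrence $Y_n=WY_{n-1}$ to a statement about the spectrum of $W$, and then use the eigenvector structure already identified to link that spectrum to the singularity of $T(x)$. Here \emph{stability} means that all solutions of \eqref{Metodo.retardo.IMEX.matricial} stay bounded, and in fact decay, for every choice of starting values. This holds as soon as the spectral radius satisfies $\rho(W)<1$: in that case $\|W^n\|\le C q^n$ for some $q<1$, by Gelfand's formula or a Jordan-form estimate. So it suffices to show that every eigenvalue $\zeta$ of $W$ satisfies $|\zeta|<1$.

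\textbf{Step 1: eigenvalues of $W$ correspond to singular $T(\zeta)$.} Take an eigenvalue $\zeta$ of $W$ with eigenvector $V=(V_1,\dots,V_{k+m})^T$. The identity blocks in the lower rows of \eqref{matrix.W} give $V_{r}=\zeta V_{r+1}$ for $r=1,\dots,k+m-1$. Set $v:=V_{k+m}$. Then $V_r=\zeta^{k+m-r}v$, and $v\neq0$, since otherwise $V=0$. Substituting into the first block row, $MV_1=\sum_r C_rV_r$, and multiplying by $\zeta^{0}$ gives
\[
\zeta^{k+m-1}(\alpha_k I-h\beta_kA)v=\sum_{r=1}^{k+m}\bigl(h\beta_{k-r}A-\alpha_{k-r}I+h\beta^*_{k+m-r}B\bigr)\zeta^{k+m-r}v .
\]
Collecting terms, and using the convention that coefficients with indices outside $\{0,\dots,k\}$ vanish, this becomes $h\,T(\zeta)v=0$ up to a power of $\zeta$. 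For $\zeta\neq0$ this shows that $T(\zeta)$ is singular. The power of $\zeta$ is harmless: we only need the conclusion for $|\zeta|\ge1$, where $\zeta\neq0$.

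\textbf{Step 2: conclude.} By hypothesis, $T(x)$ is nonsingular for every $|x|\ge1$. Step 1 then rules out any eigenvalue of $W$ with $|\zeta|\ge1$, so $\rho(W)<1$. Hence $\|Y_n\|\le C q^n\|Y_0\|\to0$, which is the claimed stability.

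The main obstacle is the index bookkeeping in Step 1, namely matching $\sum_r C_r\zeta^{k+m-r}$ with $\zeta^m\rho(\zeta)$, $\zeta^m\sigma(\zeta)$ and $\sigma^*(\zeta)$. The shift $r\mapsto j=k-r$ reproduces $\rho$ and $\sigma$ multiplied by $\zeta^m$, and the shift $j=k+m-r$ reproduces $\sigma^*$. The only care needed is in overlapping blocks when $m$ is small, and there the zero-coefficient convention absorbs the overlap automatically.
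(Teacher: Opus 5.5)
Your proposal is correct and follows essentially the same route as the paper, which (citing \cite{LMM-Seib-Ros}) uses the companion structure of $W$ to write every eigenvector as $V=(\zeta^{k+m-1}v,\dots,\zeta v,v)^T$ with $v\neq0$, deduces $T(\zeta)v=0$, and concludes that nonsingularity of $T(x)$ on $|x|\ge1$ forces $\rho(W)<1$. One small slip: the first block row of $WV=\zeta V$ reads $\zeta MV_1=\sum_r C_rV_r$, so the left side of your display should carry $\zeta^{k+m}$ rather than $\zeta^{k+m-1}$; with that correction your shifts $j=k-r$ and $j=k+m-r$ give exactly $hT(\zeta)v=0$, with no stray power of $\zeta$ and no need to exclude $\zeta=0$.
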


This criterion still has one main disadvantage, the matrix $T(x)$ relates the coefficients $\alpha_j,\beta_j,\beta_j^*$ with the matrices $hA,hB$. In order to decouple them, we multiply by the matrix $(-A)^{p-1}$, which is positive definite, on the left side of $T(\zeta)v=0$,   obtaining:
\begin{equation*}
    \frac{1}{h}\zeta^m \rho(\zeta)(-A)^{p-1}v=-\zeta^m \sigma(\zeta)(-A)^p v+\sigma^*(\zeta)(-A)^{p-1}Bv.
\end{equation*}

Now multiplying on the left by $\bar{v}$, using that $\langle v_1,v_2\rangle=\bar{v_1}v_2$ in $\mathbb{C}^d$, and defining the following parameters:
\begin{equation}
    z=-h\dfrac{\langle v,(-A)^pv\rangle}{\langle v,(-A)^{p-1}v\rangle},\quad \mu=\dfrac{\langle v, (-A)^{p-1}Bv\rangle}{\langle v, (-A)^p v\rangle},
    \label{def_z_mu}
\end{equation}
we arrive to the equation
\begin{equation}
    \zeta^m \rho(\zeta)=\zeta^m z\sigma(\zeta)-z\mu \sigma^*(\zeta).
    \label{char_equation_FOV}
\end{equation}

If $\langle v,A^p v\rangle =1$, then $\mu \in F_p$. If $\langle v,A^p v\rangle =1$, consider the normalized vector $\hat{v}=\frac{v}{\sqrt{\langle v, (-A)^p v \rangle}}$, where the denominator is well defined as (-A) is positive definite. Given these considerations, we have that
\begin{equation*}
    \langle \hat{v},(-A)^p \hat{v} \rangle = \Big\langle \dfrac{v}{\sqrt{\langle v,(-A)^p v \rangle}},(-A)^p \dfrac{v}{\sqrt{\langle v,(-A)^p v \rangle}} \Big\rangle=\dfrac{\langle v,(-A)^p v \rangle}{\langle v,(-A)^p v \rangle}=1,
\end{equation*}
\begin{equation*}
    \langle \hat{v},(-A)^{p-1}B \hat{v} \rangle = \Big\langle \dfrac{v}{\sqrt{\langle v,(-A)^p v \rangle}},(-A)^{p-1}B \dfrac{v}{\sqrt{\langle v,(-A)^p v \rangle}} \Big\rangle=\dfrac{\langle v,(-A)^{p-1}B v \rangle}{\langle v,(-A)^p v \rangle}=\mu,
\end{equation*}
and therefore $\mu=\big\langle \hat{v},(-A)^{p-1}B\hat{v}\big\rangle\in F_p$.

Let us observe that $z\in F(hA)$: suppose $\langle v,(-A)^{p-1} v\rangle =1$; if it is not the case, repeat the normalization $\hat{v}=\frac{v}{\sqrt{\langle v, (-A)^p v \rangle}}$. Now, using the change of variable $v=(-A)^{-\frac{p}{2}+\frac{1}{2}}$:
\begin{equation*}
\begin{aligned}
    \langle &v,(-A)^p v\rangle=\langle (-A)^{\frac{1-p}{2}}x,(-A)^p(-A)^{\frac{1-p}{2}}x\rangle=x^*\left( (-A)^{\frac{1-p}{2}} \right)^* (-A)^p(-A)^{\frac{1-p}{2}}x=\\
    &\;\;\quad\;\underset{\begin{array}{cc}
         \;\:\:\uparrow&  \\
         \:\;\scriptstyle A \text{ Hermitian}& 
    \end{array}}{=}x^*(-A)^{\frac{1-p}{2}} (-A)^p(-A)^{\frac{1-p}{2}}x=x^* (-A)x=\langle x,(-A)x\rangle,
    \end{aligned}
\end{equation*}
which allows us to conclude with $z\in F(hA)$.

\begin{remark}
    The equation \eqref{char_equation_FOV} has the same functional form as the characteristic equation \eqref{char_equation} derived from applying the IMEX scheme \eqref{IMEX_general} to the scalar problem \eqref{Scalar_DDE}. Therefore, we will treat the equation in the same sense as explained before. 
\end{remark}

In regard to the asymptotic stability of the problem \eqref{prob_matricial_general}, we have the following results given by K. J. in' t Hout:

\begin{theorem}[\cite{inthout1994stability}]
Consider the problem \eqref{prob_matricial_general}, and define $H^*=\{\zeta \;\; | \;\; \zeta \in \mathbb{C}, \;\; \Re(\zeta)<0 \}$. Let $\sigma(A), \rho(A)$  be the spectrum and spectral radius, respectively, of a matrix A, then the following conditions are equivalent:

\begin{align}
    (1) &\quad \begin{aligned}
        &\text{The exact solution } y(t) \text{ of } \eqref{prob_matricial_general} \text{ satisfies } \lim\limits_{t\rightarrow\infty}y(t)=0 \text{ (for } \tau>0 
        \text{), }
        \\
        &
        \text{i.e it is asymptotically stable.}
    \end{aligned} \\
    (2) &\quad \begin{aligned}
        &\sigma(A) \subset H^*, \;\; \rho((\zeta I-A)^{-1}B)<1 \text{ (as long as } \Re(\zeta)=0, \\
        & \zeta \neq 0 \text{)}, \text{ and } -1 \notin \sigma(A^{-1}B).
        \label{cond_fov_asint_estab}
    \end{aligned}
\end{align}
\end{theorem}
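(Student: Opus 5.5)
The statement is in 't Hout's characterization for $y'=Ay+By(t-\tau)$. My plan is to go through the characteristic function
\[
\Delta(\zeta)=\det\bigl(\zeta I-A-Be^{-\zeta\tau}\bigr).
\]
It is standard for linear autonomous DDEs that $\lim_{t\to\infty}y(t)=0$ for every continuous history if and only if all roots of $\Delta$ lie in $H^*$. I take this from the semigroup/Laplace-transform theory in \cite{HaleVerduynLunel1993}. Since (1) quantifies over all $\tau>0$, I will read it as delay-independent stability. The task then becomes showing that (2) is equivalent to $\Delta(\zeta)\neq0$ for all $\Re\zeta\ge0$ and all $\tau>0$.

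\emph{(1)$\Rightarrow$(2).} Letting $\tau\to\infty$ along suitable sequences, or taking $B$-perturbation limits, I need three facts.
\begin{itemize}
\item $\sigma(A)\subset H^*$. Otherwise an eigenvalue of $A$ with $\Re\ge0$ persists as a root for large $\tau$, by a Rouché argument applied to $\det(\zeta I-A-e^{-\zeta\tau}B)$ near that eigenvalue when $e^{-\zeta\tau}$ is small. This needs care when the real part is exactly zero.
\item $-1\notin\sigma(A^{-1}B)$. Otherwise $\zeta=0$ is a root, because $\det(-A-B)=0$.
\item $\rho((i\omega I-A)^{-1}B)<1$ for $\omega\neq0$. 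Suppose an eigenvalue $\nu$ of $(i\omega I-A)^{-1}B$ has $|\nu|\ge1$. If $|\nu|=1$, write $\nu=e^{i\omega\tau}$ and choose $\tau>0$ accordingly; this is possible since $\omega\ne0$. Then $\zeta=i\omega$ is a root, because $\det(i\omega I-A-e^{-i\omega\tau}B)=0$ exactly when $e^{i\omega\tau}\in\sigma((i\omega I-A)^{-1}B)$. If $|\nu|>1$, I use continuity: along the ray of $\zeta=x+i\omega$, $x>0$, the spectral radius of $(\zeta I-A)^{-1}B$ tends to $0$ as $x\to\infty$. So for some $x\ge0$ there is an eigenvalue of modulus exactly $e^{x\tau}$ with a matching argument, which again gives a root with $\Re\zeta\ge0$. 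Choosing $\tau$ so that both modulus and argument match is the delicate point. I would handle it by fixing $\omega$, varying $x$ and $\tau$ jointly, and using an intermediate value argument on $|\nu(x)|e^{-x\tau}$.
\end{itemize}

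\emph{(2)$\Rightarrow$(1).} Fix $\tau$ and suppose $\Delta(\zeta)=0$ with $\Re\zeta\ge0$. Then $e^{\zeta\tau}\in\sigma((\zeta I-A)^{-1}B)$, so $\rho((\zeta I-A)^{-1}B)\ge|e^{\zeta\tau}|\ge1$. The map $\zeta\mapsto(\zeta I-A)^{-1}B$ is analytic on the closed right half-plane, since $\sigma(A)\subset H^*$, and it tends to $0$ at infinity. I then want a maximum principle for the spectral radius; $\log\rho$ is subharmonic (Vesentini). It bounds $\rho$ on $\Re\zeta\ge0$ by its supremum on the imaginary axis, and condition (2) gives $<1$ there except at $\zeta=0$. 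At $\zeta=0$, the condition $-1\notin\sigma(A^{-1}B)$ excludes the root $\zeta=0$ itself, and the remaining case is handled by continuity.

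The main obstacle is the boundary behaviour. A supremum equal to $1$, approached as $\omega\to0$ or $\omega\to\pm\infty$, must not produce an interior point with $\rho\ge1$. Also, roots sitting exactly on the imaginary axis have to be excluded, which needs strict inequalities. Subharmonicity of $\log\rho$ only gives $\le$ in the interior. So I would combine it with the requirement $|e^{\zeta\tau}|>1$ for $\Re\zeta>0$, and use the strict hypotheses in (2) on the axis.
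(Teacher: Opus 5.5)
The paper gives no proof of this result. It quotes the statement from \cite{inthout1994stability}, so your sketch has to stand on its own.

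Reading (1) as ``for every $\tau>0$'' is the right interpretation. Your direction (2)$\Rightarrow$(1) is essentially complete, and the ``main obstacle'' you describe is not really one. By continuity in $\omega$, $\rho(A^{-1}B)\le 1$. Hence $\rho((\zeta I-A)^{-1}B)\le1$ on the whole imaginary axis, and it tends to $0$ at infinity. The maximum principle for the subharmonic function $\log\rho$ then gives $\rho\le 1$ on $\Re\zeta\ge0$. A root with $\Re\zeta>0$ would need $\rho\ge e^{\tau\Re\zeta}>1$, which is impossible. Roots on the axis are excluded by the strict hypotheses.

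If you want to avoid Vesentini, a homotopy works instead. Consider $\det(\zeta I-A-se^{-\zeta\tau}B)$ for $s\in[0,1]$. Its roots in $\Re\zeta\ge0$ are bounded, and there are none at $s=0$. None can lie on the axis, since an axis root $i\omega$ would need an eigenvalue of modulus $1/s\ge1$ of $(i\omega I-A)^{-1}B$. That is excluded for $\omega\ne0$ by $\rho<1$, and for $\omega=0$ by $\rho(A^{-1}B)\le 1$ together with $-1\notin\sigma(A^{-1}B)$.

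The genuine gap is in (1)$\Rightarrow$(2), at the point you flag but leave open: an eigenvalue $i\omega_0$ of $A$ on the imaginary axis. Rouch\'e's theorem at large $\tau$ cannot reach it, because $|e^{-\zeta\tau}|$ stays close to $1$ near the axis for every $\tau$. Your treatment of an eigenvalue with $|\nu|>1$ by jointly varying $(x,\tau)$ is also unfinished. It would need continuous eigenvalue branches and a continuous choice of $x$, which you do not have.

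Both problems disappear if, for fixed $\omega$, you study the polynomial $p_\omega(q)=\det(i\omega I-A-qB)$.
\begin{itemize}
\item For $\omega\ne0$, the values $e^{-i\omega\tau}$ with $\tau>0$ sweep the whole unit circle, so (1) gives $p_\omega\ne0$ on $|q|=1$.
\item The number of zeros of $p_\omega$ in $|q|<1$ is therefore constant on $\omega>0$ and on $\omega<0$. It is $0$ once $|\omega|>\|A\|+\|B\|$.
\item Hence $p_\omega(q)\neq0$ for all $|q|\le1$ and all $\omega\ne0$. Taking $q=0$ excludes $i\omega\in\sigma(A)$. Taking $q=1/\nu$ gives $\rho((i\omega I-A)^{-1}B)<1$ directly, with no $(x,\tau)$ argument. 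Alternatively, once imaginary eigenvalues are excluded, apply the intermediate value theorem in $\omega$ to the continuous function $\rho((i\omega I-A)^{-1}B)$, which tends to $0$ at infinity.
\item For $\omega=0$, (1) gives $p_0(1)=\det(-A-B)\ne0$, so $p_0\not\equiv0$. If $\det A=0$, then $p_0(0)=0$. Hurwitz's theorem would then produce zeros of $p_\omega$ near $q=0$ for small $\omega\ne0$, contradicting the previous step.
\end{itemize}
Together with your Rouch\'e argument for $\Re\lambda>0$ and your $\zeta=0$ argument for $-1\notin\sigma(A^{-1}B)$, this closes the direction.
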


Until now, we have assumed that the matrices A and B are such that A is Hermitian and negative definite. Furthermore, let us suppose that $A^{-1}B$ is normal, i.e., $(A^{-1}B)^*A^{-1}B=A^{-1}B(A^{-1}B)^*$, where $(A^{-1}B)^*$ denotes the conjugate transpose. Under this condition, we can deduce that (see \cite{HornJohnson2013MatrixAnalysis}):
\begin{equation*}
    r(A^{-1}B)=\rho(A^{-1}B),
\end{equation*}
and that the following inequality holds
\begin{equation*}
    \rho(A^{-1}B)\le \sup\limits_{\Re{(\zeta)=0}}\rho((\zeta I-A)^{-1}B).
\end{equation*}

Finally,  we arrive at \begin{equation*}
    F(A^{-1}B) \subseteq D(0,r(A^{-1}B)) = D(0,\rho(A^{-1}B)) \subseteq D(0,1).
\end{equation*}

Hence, summing up all the above, we end up with:
\begin{corollary}
\label{corol_FOV_stability}
    Suppose the problem \eqref{prob_matricial_general} is asymptotically stable. Then if $A^{-1}B$ is a normal matrix, any method whose unconditional stability region contains $D(0,1)$, applied to \eqref{prob_matricial_general}, will be unconditionally stable.
\end{corollary}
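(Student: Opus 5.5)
The plan is to reduce the matrix stability question to the scalar characteristic equation via the field-of-values transformation above, and then use normality of $A^{-1}B$ to place every resulting $\mu$ in $D(0,1)$. By the Proposition from \cite{LMM-Seib-Ros}, it suffices to show that $T(\zeta)$ is nonsingular for every $|\zeta|\ge1$. We argue by contradiction: suppose $T(\zeta)v=0$ for some $v\neq0$ with $|\zeta|\ge1$. Multiplying on the left by $(-A)^{p-1}$ and taking the inner product with $v$, with the choice $p=0$, gives the scalar equation \eqref{char_equation_FOV}. Here $z=-h\langle v,v\rangle/\langle v,(-A)^{-1}v\rangle$ is a negative real number, and $\mu=\langle v,(-A)^{-1}Bv\rangle/\langle v,v\rangle\in F(-A^{-1}B)=-F(A^{-1}B)$.

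The choice $p=0$ is the key decision. It makes $F_0=F((-A)^{-1}B)$, which is exactly the set controlled by the hypothesis on $A^{-1}B$. For general $p$, the set $F_p$ involves $(-A)^{p/2-1}B(-A)^{-p/2}$, which is similar but not unitarily similar to $A^{-1}B$, so normality would not transfer. I will also check that $z$ lies in $\mathbb R_{<0}$, which follows because $(-A)^{-1}$ is positive definite.

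Next we bound $\mu$. By the remark on FOV properties, item (g), normality of $A^{-1}B$ gives $F(A^{-1}B)=Co(\sigma(A^{-1}B))$, so $r(A^{-1}B)=\rho(A^{-1}B)$. Asymptotic stability, through condition (2) of in 't Hout's theorem and the displayed inequality $\rho(A^{-1}B)\le\sup_{\Re\zeta=0}\rho((\zeta I-A)^{-1}B)$, should then yield $\rho(A^{-1}B)<1$. Hence $|\mu|<1$, and the same holds after the sign flip, since $D(0,1)$ is symmetric.

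This last step is the main obstacle. The theorem only gives $\rho((\zeta I-A)^{-1}B)<1$ pointwise for $\zeta\neq0$ on the imaginary axis. The limit $\zeta\to0$ yields only $\rho(A^{-1}B)\le1$, and the extra condition $-1\notin\sigma(A^{-1}B)$ removes only one boundary point. My first attempt will be to exclude eigenvalues $e^{i\phi}$ with $\phi\neq\pi$ directly, perhaps via the large-$|\zeta|$ behaviour or via an eigenvector argument. Failing that, I will observe that the field of values is compact, so $F(A^{-1}B)$ is strictly inside $D(0,1)$ once $\rho(A^{-1}B)<1$ holds. If strictness genuinely fails, I will interpret "contains $D(0,1)$" as including the relevant closed set, as the paper's chain $D(0,\rho)\subseteq D(0,1)$ suggests.

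Granting $|\mu|<1$, we have $\mu\in D(0,1)\subseteq D\subseteq D_z$ for this $z<0$ and the given $m$. So by definition every root of \eqref{char_equation_FOV} satisfies $|\zeta|<1$, contradicting $|\zeta|\ge1$. Therefore $T(\zeta)$ is nonsingular for all $|\zeta|\ge1$, and the method is stable. Since $h>0$ was arbitrary and $z$ ranges only over negative reals, this stability is unconditional.
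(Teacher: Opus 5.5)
Your reduction is the paper's own route, written out more carefully. The choice $p=0$ (the paper implicitly works with $F(A^{-1}B)=-F_0$), the sign flip, the check $z<0$, and the appeal to nonsingularity of $T(x)$ for $|x|\ge1$ are all correct. The gap is exactly the step you flagged, and it cannot be closed, because the hypotheses do not imply $\rho(A^{-1}B)<1$.

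Take any Hermitian negative definite $A$ with eigenvalues $\lambda_j<0$, and set $B=A$. Then $A^{-1}B=I$ is normal and $\sigma(A)\subset H^*$. Also $\rho\bigl((i\omega I-A)^{-1}A\bigr)=\max_j|\lambda_j|/\sqrt{\omega^2+\lambda_j^2}<1$ for $\omega\neq0$, and $-1\notin\sigma(I)$. So condition (2) of in 't Hout's theorem holds. Yet $\rho(A^{-1}B)=1$, and $\mu=\langle v,(-A)^{-1}Av\rangle/\langle v,v\rangle=-1$ for every $v$. The scalar choice $A=-1$, $B=-e^{i\phi}$ with $\phi\neq\pi$ puts $\mu$ at any point of the unit circle other than $1$.

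So your first attempt, excluding unimodular eigenvalues $e^{i\phi}$ with $\phi\neq\pi$, must fail, and the compactness remark is moot. The paper's argument has the same defect. The supremum inequality only yields $\rho(A^{-1}B)\le1$, and $F(X)$ lies in the \emph{closed} disk of radius $r(X)$. Hence the chain ending in $F(A^{-1}B)\subseteq D(0,1)$, with $D$ open, is not justified.

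Your fallback does not help if the ``closed set'' is $\overline{D(0,1)}$. For any consistent method, $\rho(1)=0$ and $\sigma(1)=\sigma^*(1)$, so $\zeta=1$ is a root of \eqref{char_equation} at $\mu=1$ for every $z$; that is, $1\in\Gamma_z$. No consistent method contains the closed disk in its stability region, so under that reading the corollary becomes vacuous.

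The right repair exploits $-1\notin\sigma(A^{-1}B)$. By normality, $F(A^{-1}B)=Co(\sigma(A^{-1}B))$ meets the unit circle only at unimodular eigenvalues, so $\mu\in\overline{D(0,1)}\setminus\{1\}$. What must be required of the method is that this set lies in $D_z$ for every \emph{finite} $z<0$; only finite $z$ occur for finite $h$.

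For the $\beta_0$-family this holds. The computation in the proof of Theorem~\ref{Theorem_unconditional_stab} gives, on $\Gamma_z$, $|\mu|^2\ge1+|\rho(e^{i\theta})|^2/(z^2|\sigma(e^{i\theta})|^2)$. The polynomial $\rho(\zeta)=(\zeta-1)(\zeta-1+2\beta_0)$ vanishes on the circle only at $\zeta=1$, so $\Gamma_z\cap\partial D(0,1)=\{1\}$. Along the segment $s\mu$, $s\in[0,1]$, no root can reach $|\zeta|=1$, and the leading coefficient $1-z$ never vanishes. Hence all $m+2$ roots stay in $D(0,1)$.

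For a general method assumed only to contain the open disk, you must either add the hypothesis $\rho(A^{-1}B)<1$ or supply such a method-specific boundary argument.
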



\section{Designing the family of methods}
\label{s3:new-family}

Consider the general formula for 2-step linear methods:
\begin{equation}
\begin{aligned}
    \alpha_2 y_{n+2}+&\alpha_1 y_{n+1}+\alpha_0 y_n=h(\beta_2 f(t_{n+2},y_{n+2})+\beta_1 f(t_{n+1},y_{n+1})+\beta_0 f(t_n,y_n)\\
    & +\beta_2^* g(t_{n+2-m},y_{n+2},y_{n+2-m})+\beta_1^* g(t_{n+1-m},y_{n+1},y_{n+1-m})+\beta_0^* g(t_{n-m},y_{n},y_{n-m})).
    \end{aligned}
\end{equation}
The choice of coefficients that we will make comes from the following arguments: we normalize
\(\alpha_2=1\), choose \(\beta_2=1\), and impose the second-order
conditions; 0-stability and A-stability.
\newline
Moreover, if we apply the method to the scalar problem \eqref{Scalar_DDE} and then take the limit $z \rightarrow -\infty$ in the expression of $\mu\in \Gamma_z$:
\begin{equation}
    \mu_{m,z,\theta}=-\dfrac{(e^{i\theta})^m(\rho(e^{i\theta})-z\sigma(e^{i\theta}))}{z\sigma^*(e^{i\theta})}\xrightarrow[]{z\rightarrow-\infty}e^{im\theta}\dfrac{\beta_0+\beta_1 e^{i\theta}+\beta_2 e^{i2\theta}}{\beta_0^*+\beta_1^* e^{i\theta}+\beta_2^* e^{i2\theta}},
\end{equation}
we can directly observe a symmetry in the coefficients, so by setting $\beta_j=\beta_j^*$ we could assure that $|\mu_{-\infty}(\theta)|=1$ $\forall \theta$, hence $\Gamma_{-\infty}=\partial D(0,1)$. 
\newline
Proving that $\Gamma_z$ is a closed curve $\forall z\in \mathbb{R}_{<0}\cup \{-\infty\}$, and that its innermost region coincides with $D_z$, would imply that this method is stable for $z=-\infty$.
\newline
Finally, we end up with the following family of methods:
\begin{equation}
    y_{n+2}+2(\beta_0-1) y_{n+1}+(1-2\beta_0) y_n=z( y_{n+2}+(\beta_0-1) y_{n+1}+\beta_0 y_n-\mu( y_{n+2-m}+(\beta_0-1) y_{n+1-m}+\beta_0 y_{n-m})),
    \label{b0_family}
\end{equation}
with the constraint $0<\beta_0< 1$.
\begin{theorem}
   If the exact solution is sufficiently smooth, the local truncation error of the previous methods has the form
    \begin{equation}
        (T_h)_n^{\beta_0-\text{family}}=-\dfrac{3+\beta_0}{6}y^{(3)}(t_n)h^2+O(h^3).
        \label{order_consistency}
    \end{equation}
    Hence, this family has second order of consistency.
\end{theorem}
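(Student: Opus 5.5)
The plan is a direct Taylor expansion of the local truncation error \eqref{local_truncation_error} about $t_n$, using the specific coefficients of \eqref{b0_family}.

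\textbf{Step 1: collapse the two sums into one.} Reading off \eqref{b0_family}, the coefficients are
\[
(\alpha_2,\alpha_1,\alpha_0)=(1,\,2(\beta_0-1),\,1-2\beta_0),\qquad
(\beta_2,\beta_1,\beta_0),\quad \beta_j^*=\beta_j,
\]
where $\beta_2=1$ and $\beta_1=\beta_0-1$. Because $\beta_j^*=\beta_j$, the two sums in \eqref{local_truncation_error} combine into $\sum_j\beta_j\bigl(f+g\bigr)$, evaluated at the exact arguments $t_{n+j}$, $y(t_{n+j})$ and $y(t_{n+j-m})$. Since $y$ solves \eqref{DDE_general}, this combination equals $y'(t_{n+j})$ exactly. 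Hence
\[
(T_h)_n=\frac1h\sum_{j=0}^2\alpha_j\,y(t_n+jh)-\sum_{j=0}^2\beta_j\,y'(t_n+jh).
\]
The delay therefore plays no role in the consistency analysis. What is left is the standard linear multistep truncation error.

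\textbf{Step 2: Taylor expansion and the order conditions.} Expand $y(t_n+jh)$ to fourth order and $y'(t_n+jh)$ to third order, assuming $y\in\mathcal C^4$ on the relevant interval (no breaking point inside $[t_n,t_{n+2}]$, as stipulated in the paper). This gives
\[
(T_h)_n=\sum_{q\ge0}C_q\,h^{q-1}y^{(q)}(t_n),\qquad
C_q=\frac{1}{q!}\sum_j j^q\alpha_j-\frac{1}{(q-1)!}\sum_j j^{q-1}\beta_j .
\]
I will then verify the low-order conditions:
\begin{itemize}
\item $C_0=\sum_j\alpha_j=1+2\beta_0-2+1-2\beta_0=0$;
\item $C_1=\sum_j j\alpha_j-\sum_j\beta_j=2\beta_0-2\beta_0=0$;
\item $C_2=\tfrac12\sum_j j^2\alpha_j-\sum_j j\beta_j=(1+\beta_0)-(1+\beta_0)=0$.
\end{itemize}

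\textbf{Step 3: the leading error term.} Compute
\[
C_3=\frac{6+2\beta_0}{6}-\frac{3+\beta_0}{2}=-\frac{3+\beta_0}{6},
\]
which is nonzero for $0<\beta_0<1$. The fourth-order remainder is bounded by $\max|y^{(4)}|\,h^3$ times a constant depending only on $\beta_0$. This yields the stated formula \eqref{order_consistency}, and with it second-order consistency uniformly in $n$.

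\textbf{Main obstacle.} The only genuinely delicate step is justifying the smoothness used in the Taylor expansion. For DDEs the solution may lose regularity at breaking points $t_0+k\tau$. I would handle this by invoking the paper's standing assumption that the solution is sufficiently smooth and satisfies the compatibility conditions at breaking points, which makes $y\in\mathcal C^4$ wherever the expansion is used. The remaining algebra is routine.
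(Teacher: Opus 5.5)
Your proposal is correct and matches the paper's approach: the paper gives no written proof and relies on a symbolic Taylor expansion of the local truncation error. Your argument does that expansion by hand, using $\beta_j^*=\beta_j$ to merge the $f$ and $g$ sums into $y'(t_{n+j})$ and then checking $C_0=C_1=C_2=0$ and $C_3=-\frac{3+\beta_0}{6}$; your arithmetic is correct, and the $\mathcal C^4$ regularity you need for the $O(h^3)$ remainder is covered by the ``sufficiently smooth'' hypothesis.
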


The next Definition and Proposition let us justify the assumptions previously made:
\begin{definition}
    For any $z<0$, the curve described by $\Gamma_z$ in \eqref{Gamma_z} delimit a multiple connected region $\Omega$ with multiple components, that is $\mathbb{C}=\Omega \sqcup \Omega^C$ and $\Omega=\Gamma_z \sqcup \Omega_1 \sqcup \dots \sqcup \Omega_r$. If $\mu_j \in \Omega_j$, then $\Omega_j$ is the set of points $\bar{\mu}_j$ such that there exists a continuous curve $\gamma$ connecting $\mu_j$ and $\bar{\mu}_j$, verifying $\gamma \cap \Gamma_z=\emptyset$, thus $\Omega_i \cap \Omega_j = \emptyset$ if $i\not=j$, and $\Omega_i \cap \Gamma_z=\emptyset$ $\forall i$.
    \newline
    We will denote the component containing the value $\mu=0$ by $\tilde{D}_z$, and we will call it the innermost region. 
\end{definition}
\begin{remark}
    For the family of methods considered, as the A-stability condition is imposed, then for any $z<0$ and $\mu=0$, the family is stable (regarding the roots of the polynomial \eqref{char_equation}). 
\end{remark}

\begin{lemma}
\label{lemma:denominator}
Let \(0<\beta_0<1\). Then
\[
 \sigma(e^{\mathrm i\theta})
 =\beta_0+(\beta_0-1)e^{\mathrm i\theta}
   +e^{2\mathrm i\theta}\neq0
 \qquad (0\le\theta\le2\pi).
\]
Moreover, \(\sigma\) is a Schur polynomial.
\end{lemma}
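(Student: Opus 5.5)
The quadratic here is $\sigma(\zeta)=\zeta^2+(\beta_0-1)\zeta+\beta_0$, with leading coefficient $1$, middle coefficient $\beta_0-1$ and constant term $\beta_0$. Since both coefficients are real, I will prove that $\sigma$ is a Schur polynomial, meaning both roots lie in the open unit disk. The first claim then follows immediately: a polynomial with no roots on $|\zeta|\le 1$ cannot vanish at $e^{\mathrm i\theta}$ for any $\theta$.

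The main tool is the classical Schur--Cohn (Jury) criterion for a real monic quadratic $\zeta^2+a_1\zeta+a_0$. Its roots lie strictly inside the unit circle if and only if $|a_0|<1$ and $|a_1|<1+a_0$. Here $a_0=\beta_0$ and $a_1=\beta_0-1$, so the two checks are as follows.

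\begin{enumerate}[label=(\roman*)]
\item $|a_0|=\beta_0<1$, which holds because $0<\beta_0<1$.
\item $|a_1|=1-\beta_0<1+\beta_0$, which is equivalent to $\beta_0>0$.
\end{enumerate}

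Both conditions hold on the whole interval $0<\beta_0<1$, so $\sigma$ is Schur.

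If I want to avoid citing Jury's test, I can argue directly with the discriminant $\Delta=(\beta_0-1)^2-4\beta_0=\beta_0^2-6\beta_0+1$, which changes sign at $\beta_0=3-2\sqrt2$.

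\begin{enumerate}[label=(\roman*)]
\item If $\Delta<0$, the roots are complex conjugates. Their product is $|\zeta|^2=\beta_0<1$, so both have modulus $\sqrt{\beta_0}<1$.
\item If $\Delta\ge0$, i.e.\ $0<\beta_0\le 3-2\sqrt2$, both roots are real. They have the same sign because their product is $\beta_0>0$, and that sign is negative because their sum is $1-\beta_0>0$ with a minus sign, namely $-(1-\beta_0)<0$. It therefore suffices to check $\sigma(-1)>0$ together with the vertex lying in $(-1,0)$. We have $\sigma(-1)=1-(\beta_0-1)+\beta_0=2>0$, and the vertex is at $-(\beta_0-1)/2=(\beta_0-1)/2\in(-1/2,0)$. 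Hence both roots lie in $(-1,0)$.
\end{enumerate}

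The only step that needs care is the real-root case. There one must confirm that no root reaches $-1$, which comes down to the sign of $\sigma(-1)=2$. The complex case follows at once from Vieta's formulas. Finally, the non-vanishing on $|\zeta|=1$ is immediate from the Schur property, which completes the argument for the statement.
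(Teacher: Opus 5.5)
Your main argument is correct and is essentially the paper's. The paper also applies the Jury test to $\zeta^2+(\beta_0-1)\zeta+\beta_0$, checking $|\beta_0|<1$, $\sigma(1)=2\beta_0>0$ and $\sigma(-1)=2>0$, which is equivalent to your conditions $|a_0|<1$, $|a_1|<1+a_0$. The one difference is the first claim. The paper verifies $\sigma(e^{\mathrm i\theta})\neq0$ separately, via $e^{-\mathrm i\theta}\sigma(e^{\mathrm i\theta})=(1+\beta_0)\cos\theta+\beta_0-1+\mathrm i(1-\beta_0)\sin\theta$ and the values at $\theta=0,\pi$. You deduce it directly from the Schur property, which is valid and shorter.

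Your optional elementary argument, however, has a sign error in the real-root case.

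\begin{itemize}
\item For $\zeta^2+a_1\zeta+a_0$ the sum of the roots is $-a_1=-(\beta_0-1)=1-\beta_0>0$, so the two real roots are \emph{positive}, not negative.
\item The vertex is at $-(\beta_0-1)/2=(1-\beta_0)/2\in(0,1/2)$, not at $(\beta_0-1)/2$. For example, $\beta_0=0.1$ gives roots $\approx0.770$ and $\approx0.130$.
\item The endpoint to exclude is therefore $+1$, not $-1$. The check $\sigma(-1)=2>0$ is irrelevant here. What you need is $\sigma(1)=2\beta_0>0$, which together with the vertex lying in $(0,1)$ places both roots in $(0,1)$.
\end{itemize}

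This is exactly where the hypothesis $\beta_0>0$ is essential. As $\beta_0\to0^+$ the larger root tends to $1$, and at $\beta_0=0$ one has $\sigma(\zeta)=\zeta(\zeta-1)$, with a root on the unit circle. With this correction the discriminant argument works. Your closing remark should then say that the delicate point is $\zeta=1$, controlled by $\sigma(1)=2\beta_0$, rather than $\zeta=-1$.
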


\begin{proof}
Multiplication by \(e^{-\mathrm i\theta}\) gives
\[
 e^{-\mathrm i\theta}\sigma(e^{\mathrm i\theta})
 =
 (1+\beta_0)\cos\theta+\beta_0-1
 +\mathrm i(1-\beta_0)\sin\theta.
\]
If this expression vanished, its imaginary part would imply
\(\sin\theta=0\). For \(\theta=0\) and \(\theta=\pi\), its values are,
respectively, \(2\beta_0\) and \(-2\), so neither is zero.

The Jury conditions for
\(\zeta^2+(\beta_0-1)\zeta+\beta_0\) are
\[
 |\beta_0|<1,\qquad
 1+(\beta_0-1)+\beta_0=2\beta_0>0,\qquad
 1-(\beta_0-1)+\beta_0=2>0.
\]
Thus both roots of \(\sigma\) lie strictly inside the unit disk.
\end{proof}

\begin{proposition}
    Let $\tilde{D}_z$ be the innermost region of $\Gamma_z$, then for the $\beta_0$-family is verified that $\tilde{D}_z=D_z$.
\end{proposition}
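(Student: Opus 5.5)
We prove $\tilde D_z=D_z$ with a root-counting (homotopy) argument in the parameter $\mu$, using the fact that roots of the characteristic polynomial depend continuously on $\mu$. Fix $m\ge1$ and $z<0$. For the $\beta_0$-family, $\beta_j=\beta_j^*$, so $\sigma^*=\sigma$ and
\[
P(\mu,z,\zeta)=\zeta^m\bigl(\rho(\zeta)-z\sigma(\zeta)\bigr)+z\mu\,\sigma(\zeta),
\]
a polynomial of exact degree $m+2$ in $\zeta$. Its leading coefficient is $1-z\neq0$, independent of $\mu$. Hence, as $\mu$ varies continuously in $\mathbb C$, the $m+2$ roots vary continuously and none escapes to infinity.

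\textbf{Step 1 (boundary crossings).} A root can cross the unit circle only if $P(\mu,z,e^{i\theta})=0$ for some $\theta$. By Lemma~\ref{lemma:denominator}, $\sigma(e^{i\theta})\neq0$, so this equation can be solved for $\mu$ and is equivalent to $\mu\in\Gamma_z$ as given by \eqref{Gamma_z}. Consequently, on each connected component $\Omega_j$ of $\mathbb C\setminus\Gamma_z$, the number of roots of $P$ in the open unit disk is constant.

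\textbf{Step 2 ($\tilde D_z\subseteq D_z$).} By the preceding Remark, $A$-stability gives stability at $\mu=0$. Indeed, $P(0,z,\zeta)=\zeta^m(\rho-z\sigma)(\zeta)$; the factor $\zeta^m$ contributes roots at $0$, and $\rho-z\sigma$ is Schur for $z<0$. (If needed, this last fact can be checked directly via the Jury conditions on $(1-z)\zeta^2+(2\beta_0-2-z(\beta_0-1))\zeta+(1-2\beta_0-z\beta_0)$, in the same spirit as the proof of Lemma~\ref{lemma:denominator}.) So all $m+2$ roots lie in $|\zeta|<1$ at $\mu=0$. By Step~1 this persists on the whole component $\tilde D_z$ containing $0$, which gives $\tilde D_z\subseteq D_z$.

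\textbf{Step 3 ($D_z\subseteq\tilde D_z$).} Points of $\Gamma_z$ are not in $D_z$, since they have a root with $|\zeta|=1$. It remains to show that no other component $\Omega_j$ consists of stable parameters. The plan is to show that each such component is visited by $\mu$ with large modulus, or is separated from $0$ by a crossing that loses a root. The cleanest route uses the argument principle: for $\mu\notin\Gamma_z$, the number of roots of $P$ inside the unit disk is
\[
N(\mu)=m+2-\operatorname{wind}\bigl(\Gamma_z,\mu\bigr)\cdot(\pm1).
\]
To derive this, write $P=z\sigma(\zeta)\bigl(\mu-\mu(\zeta)\bigr)$, where $\mu(\zeta)=-\zeta^m(\rho-z\sigma)/(z\sigma)$. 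Since $\sigma$ is Schur (Lemma~\ref{lemma:denominator}), it has exactly $2$ zeros in the disk, and $\mu(\zeta)$ has $2$ poles there. The argument principle then gives
\[
N(\mu)=\#\{\text{zeros of }\sigma\}+\operatorname{wind}\bigl(\mu(e^{i\theta})-\mu\bigr)=2+\operatorname{wind}_\theta\bigl(\mu-\mu(e^{i\theta})\bigr).
\]
At $\mu=0$, where $N=m+2$, the winding number of $\Gamma_z$ about $0$ must therefore equal $m$. Stability requires $N=m+2$, that is, winding number $m$ about $\mu$. We then need to show that winding number $m$ is attained only on the component of $0$. For $|\mu|$ large the winding number is $0$, so $N=2<m+2$ and the point is unstable.

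\textbf{Main obstacle.} The hardest part is Step~3 for general components: excluding a second component with winding number $m$, which is a geometric fact about $\Gamma_z$. I would first attempt to show that $\arg\mu(e^{i\theta})$ is monotone in $\theta$. Its argument is $m\theta+\arg\bigl((\rho-z\sigma)/\sigma\bigr)(e^{i\theta})+\pi$, and the second term is bounded with controlled derivative because both $\rho-z\sigma$ and $\sigma$ are Schur. Monotone argument makes $\Gamma_z$ star-shaped with respect to $0$, winding exactly $m$ times. Then every point with winding number $m$ lies inside the innermost loop, and that loop is exactly the component $\tilde D_z$. If monotonicity fails for some parameters, I would instead fall back on the explicit formula for $|\mu(e^{i\theta})|$ together with a crossing-direction argument at self-intersections.
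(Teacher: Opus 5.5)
\textbf{There is a genuine gap: the reverse inclusion $D_z\subseteq\tilde D_z$ is not proved, and the route you propose for it fails.}

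Your Steps 1 and 2 are correct. They give the paper's argument for $\tilde D_z\subseteq D_z$: $A$-stability gives stability at $\mu=0$, and the number of roots in the unit disk can change only when $\mu$ crosses $\Gamma_z$. You add useful care there: the leading coefficient $1-z$ does not depend on $\mu$, and $\rho-z\sigma$ is Schur for every $z<0$.

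Your identity $N(\mu)=2+W(\Gamma_z,\mu)$ is also correct. It is a sharper version of the paper's one-sentence appeal to Cauchy's theorem. It shows that $W\le m$ everywhere and that $D_z=\{\mu\notin\Gamma_z:\ W(\Gamma_z,\mu)=m\}$. But this only reformulates the problem. The inclusion $D_z\subseteq\tilde D_z$ is exactly the claim that the maximal winding value $m$ occurs on only one component of $\C\setminus\Gamma_z$. Your proposal leaves that open, and the paper's proof only asserts it as well.

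Monotonicity of $\arg\mu$ does not hold in general. With $\zeta=e^{i\theta}$, let $s_{1,2}$ be the roots of $\sigma$ and $r_{1,2}(z)$ those of $\rho-z\sigma$. Then
\[
\frac{d}{d\theta}\arg\mu(e^{i\theta})=m+\sum_{j}\operatorname{Re}\frac{\zeta}{\zeta-r_j(z)}-\sum_{j}\operatorname{Re}\frac{\zeta}{\zeta-s_j}.
\]
Both sums are positive because both polynomials are Schur, but nothing bounds the subtracted one. At $\theta=\arg s_1$ it exceeds $1/(1-|s_1|)$. For $3-2\sqrt2<\beta_0<1$ the roots are complex with $|s_1|=\sqrt{\beta_0}$, so this bound blows up as $\beta_0\to1$.

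This already happens for the paper's own choice $\beta_0=1/2$, with $z=-1$ and $m=1$:
\begin{itemize}
\item At $\theta=\arg s_1\approx69.3^\circ$ the derivative is about $1+2.26-4.01<0$.
\item Indeed $\mu(e^{i\theta})\approx-0.50+2.60i$ at $\theta=60^\circ$ and $\approx0.01+3.33i$ at $\theta=80^\circ$, so the argument drops from about $100.9^\circ$ to $89.9^\circ$.
\item As a result, $\Gamma_z$ has small clockwise self-intersecting loops, is not star-shaped, and your ray argument does not apply.
\end{itemize}
In this example the conclusion still holds, because the loops enclose regions with $W=-1$, but that has to be proved. Your fallback (explicit modulus plus crossing directions at self-intersections) describes the task rather than carrying it out.

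What is missing is a global statement of the following kind. As $\mu$ moves outward along a ray from $0$, a root leaves the disk at a crossing of $\Gamma_z$ exactly when $\frac{d}{d\theta}\arg\mu>0$ there, and enters when it is negative. You would need to show that along every ray $W$ never returns to the value $m$ after the first crossing. That would make $\{W=m\}$ star-shaped about $0$, hence equal to $\tilde D_z$. Alternatively, you would need another argument that $\{W=m\}$ is connected.

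A minor slip: since $-1/z>0$, there is no $+\pi$ in your expression for $\arg\mu$.
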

\begin{proof}
    We will first show that $\Gamma_z$ is a closed curve; thus, $\bar{D}_z$ is bounded. 
    \newline
    Let us take the expression for $\mu$: 
    \begin{equation}
        \mu_{m,z,\theta}=\big(e^{i\theta}\big)^m \left(1+\big( 1-e^{i\theta} \big)\frac{\big(-1+2\beta_0+e^{i\theta}\big)}{\big( \beta_0+e^{i\theta}(\beta_0-1)+e^{2i\theta} \big)z}\right),\quad \theta\in[0,2\pi],
    \end{equation}
    The domain $[0,2\pi]$ is compact in $\mathbb{R}$, as it is closed and bounded (Heine-Borel theorem). One can directly check that $\mu_{m,z,0}=\mu_{m,z,2\pi}=1$.   And, since the denominator never reaches the value 0 (Lemma \ref{lemma:denominator}), then this function is continuous on the whole domain.
    \newline
    Hence, from a well-known result in analysis, it follows that the image of a continuous function over a compact set is itself compact. We can conclude that $\Gamma_z$ is a compact set in $\mathbb{C}$, thus closed and bounded.
    \newline
    To prove the identity $\bar{D}_z=D_z$, we follow the same arguments as in \cite{articulo_ana}. In the case $\mu=0$, equation \eqref{DDE_general} becomes equation \eqref{A-stability_problem}. Since our family is A-stable, its methods are stable for any $\lambda<0$, that is, $0\in D_z$ $\forall z \in \mathbb{R}_{<0}\cup \{-\infty\}$. Now, using the fact that the polynomial $P(\mu,z,\zeta)$ is continuous and $\Gamma_z$is the curve determined by points with $|\zeta|=1$, we can assure that $0\in \bar{D}_z\subseteq D_z$. The reverse inclusion is deduced via Cauchy's integral theorem, from which it is possible to show that $P(\mu,z,\zeta)$ has all the roots with $|\zeta|<1$ in $\bar{D}_z$ .
\end{proof}
In order to prove our main goal, which is to demonstrate the unconditional stability of the $\beta_0$-family \eqref{b0_family}, let us introduce the concept of winding number and one major result:
\begin{definition}
    Let $\gamma:[a,b]\rightarrow \mathbb{C}$ be a closed curve and $z\notin \Gamma :=\gamma([a,b])$, we define the winding number of $z$ with respect to $\gamma$ as
    \begin{equation}
        W(\gamma,z)\equiv W_{\gamma}(z):=\dfrac{1}{2\pi i}\int_\gamma \dfrac{1}{\xi-z}d\xi.
    \end{equation}
    \end{definition}
    \begin{proposition}
        With the previous notation, the following statements are satisfied:
        \begin{enumerate}
            \item If $\omega \in \mathbb{C}\setminus \Gamma$, then $W_\gamma (\omega)\in \mathbb{Z}$.
            \item If $\omega_1,\omega_2$ are in the same connected component, then $W_\gamma(\omega_1)=W_\gamma(\omega_2)$.
        \end{enumerate}
    \label{prop_connected_comps}\end{proposition}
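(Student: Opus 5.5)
The two statements are classical properties of the winding number of a closed curve, and I will prove them directly from the integral definition. I assume, as is implicit in the definition, that $\gamma$ is piecewise $\mathcal C^1$; this holds for $\Gamma_z$, which is parametrized smoothly by $\theta$. Write the integral as
\[
W_\gamma(\omega)=\frac{1}{2\pi i}\int_a^b\frac{\gamma'(s)}{\gamma(s)-\omega}\,ds .
\]

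\textbf{Integrality (statement 1).} Fix $\omega\notin\Gamma$ and introduce the auxiliary function
\[
\phi(t)=\exp\!\left(\int_a^t\frac{\gamma'(s)}{\gamma(s)-\omega}\,ds\right),\qquad t\in[a,b].
\]
This function is continuous and piecewise differentiable on $[a,b]$. Differentiating the quotient $\phi(t)/(\gamma(t)-\omega)$ gives zero away from the finitely many break points, so the quotient is constant. Evaluating at $t=a$, where $\phi(a)=1$, yields
\[
\phi(t)=\frac{\gamma(t)-\omega}{\gamma(a)-\omega}.
\]
Since the curve is closed, $\gamma(b)=\gamma(a)$, and hence $\phi(b)=1$. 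Therefore the integral in the exponent lies in $2\pi i\mathbb Z$, which means $W_\gamma(\omega)\in\mathbb Z$.

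\textbf{Constancy on components (statement 2).} It suffices to show that $\omega\mapsto W_\gamma(\omega)$ is continuous on the open set $\mathbb C\setminus\Gamma$. The set $\Gamma$ is compact, so for fixed $\omega_0\notin\Gamma$ we have $\delta:=\operatorname{dist}(\omega_0,\Gamma)>0$. For $|\omega-\omega_0|<\delta/2$, the difference of the integrands satisfies
\[
\left|\frac{1}{\xi-\omega}-\frac{1}{\xi-\omega_0}\right|=\frac{|\omega-\omega_0|}{|\xi-\omega|\,|\xi-\omega_0|}\le\frac{2|\omega-\omega_0|}{\delta^2}.
\]
Integrating against the finite length of $\gamma$ shows $|W_\gamma(\omega)-W_\gamma(\omega_0)|\to0$ as $\omega\to\omega_0$, so $W_\gamma$ is continuous.

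A continuous integer-valued function is locally constant. The connected components in the sense of the preceding definition are path-connected: any two points are joined by a curve avoiding $\Gamma_z$. Composing $W_\gamma$ with such a path gives a continuous integer-valued function on an interval, which must be constant. Hence $W_\gamma(\omega_1)=W_\gamma(\omega_2)$.

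The only delicate step is statement 1, namely choosing the auxiliary function $\phi$ so that no branch of the logarithm is needed; everything else is routine estimation.
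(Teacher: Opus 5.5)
Your proof is correct and complete: $\phi$ yields integrality without choosing a branch of the logarithm, and the bound $2|\omega-\omega_0|/\delta^2$, multiplied by the length of $\gamma$, gives continuity and hence constancy on (path-)components of $\mathbb{C}\setminus\Gamma$. The paper gives no proof of its own and simply quotes the proposition as a standard fact, so your argument is the classical textbook one. Your piecewise-$\mathcal C^1$ assumption is the one the paper's integral definition implicitly needs, and it holds for $\Gamma_z$ because $z\sigma^*(e^{i\theta})=z\sigma(e^{i\theta})$ never vanishes on the unit circle.
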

    We are now able to prove the next result:
    \begin{theorem}
        The region of unconditional stability of the $\beta_0$-family \eqref{b0_family}, with $0<\beta_0<1$, is
        \begin{equation}
            D=D(0,1).
        \end{equation}
        \label{Theorem_unconditional_stab}
    \end{theorem}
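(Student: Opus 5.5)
The plan is to prove the two inclusions $D(0,1)\subseteq D$ and $D\subseteq \overline{D(0,1)}$, and then to exclude the boundary circle. The central observation is that the numerator in the expression for $\mu_{m,z,\theta}$ is, up to sign, the first characteristic polynomial. Indeed
\[
\rho(\zeta)=\zeta^2+2(\beta_0-1)\zeta+1-2\beta_0=(\zeta-1)(\zeta+1-2\beta_0),
\]
and since $\sigma^*=\sigma$ the boundary locus can be written as
\[
\mu_{m,z,\theta}=e^{im\theta}\Bigl(1-\frac{1}{z}\,\frac{\rho(e^{i\theta})}{\sigma(e^{i\theta})}\Bigr).
\]
Write $q(\theta)=\rho(e^{i\theta})/\sigma(e^{i\theta})$. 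This is well defined by Lemma~\ref{lemma:denominator}. For $z<0$ we then obtain
\[
|\mu_{m,z,\theta}|^2=1-\frac{2}{z}\,\Re q(\theta)+\frac{|q(\theta)|^2}{z^2}.
\]
Hence $|\mu_{m,z,\theta}|\ge 1$ for every $\theta$, every $m$ and every $z<0$, provided that $\Re q(\theta)\ge 0$ on the unit circle.

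The condition $\Re q\ge0$ is exactly the boundary-locus form of the A-stability of the implicit part. Because A-stability was imposed on the family, this could be quoted. To be safe, I would verify it directly by computing
\[
\Re\bigl(\rho(e^{i\theta})\overline{\sigma(e^{i\theta})}\bigr)
\]
after multiplying both factors by $e^{-i\theta}$, as in the proof of Lemma~\ref{lemma:denominator}. This gives real and imaginary parts in $\cos\theta$ and $\sin\theta$, and I expect the product to reduce to a nonnegative multiple of $(1-\cos\theta)$ times a factor depending on $\beta_0$, which is positive for $0<\beta_0<1$. This algebraic check is the step I expect to need the most care, since the sign must hold uniformly in $\beta_0\in(0,1)$.

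With $|\mu|\ge1$ on $\Gamma_z$ and $\mu_{m,z,0}=1$ established, we get $\sigma_z=1$ for every $z\in\mathbb R_{<0}$. For $z=-\infty$ the limit curve is $\Gamma_{-\infty}=\partial D(0,1)$, so $\sigma_{-\infty}=1$ as well. The Corollary following Koto's Lemma then gives $D(0,1)=D(0,\sigma_z)\subseteq D_z$ for all $z$ and $m$, hence $D(0,1)\subseteq D$.

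For the reverse inclusion, note first that $\mu=1$ lies on every $\Gamma_z$. In fact $\zeta=1$ is a root of \eqref{char_equation} when $\mu=1$, because $\rho(1)=0$ and $\sigma(1)=\sigma^*(1)$; so $1\notin D$. More generally, consider $z=-\infty$, where $\Gamma_{-\infty}$ is the unit circle. By the Proposition $\tilde D_z=D_z$, the stability region $D_{-\infty}$ is the component of $\mathbb C\setminus\Gamma_{-\infty}$ containing $0$, namely $D(0,1)$. Any $\mu$ with $|\mu|\ge1$ lies either on $\Gamma_{-\infty}$ or in the unbounded component; by Proposition~\ref{prop_connected_comps} its winding number differs from that of $0$. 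Through the argument principle applied to $P_{m,\infty}(\mu,\cdot)$, this means that not all roots lie in $|\zeta|<1$. Therefore $D\subseteq D_{-\infty}=D(0,1)$, and combining the two inclusions gives $D=D(0,1)$.
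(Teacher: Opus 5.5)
Your proof is correct. Its core is the same as the paper's: both write $\mu_{m,z,\theta}=e^{\mathrm i m\theta}\bigl(1-\rho(e^{\mathrm i\theta})/(z\,\sigma(e^{\mathrm i\theta}))\bigr)$ and get $|\mu|\ge1$ on every $\Gamma_z$ from the sign of $\operatorname{Re}(\rho\overline{\sigma})$. The direct check you anticipate comes out as
\[
\operatorname{Re}\bigl(\rho(e^{\mathrm i\theta})\overline{\sigma(e^{\mathrm i\theta})}\bigr)=2(1-\beta_0)(1-\cos\theta)^2=8(1-\beta_0)\sin^4(\theta/2),
\]
which is the paper's expression after dividing by $|\sigma(e^{\mathrm i\theta})|^2$. (The paper writes it for $-\overline{q}$, hence the opposite sign.)

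The two proofs differ in how this bound yields $D(0,1)\subseteq D_z$. The paper argues by contradiction, using a continuation in $\bar z=1/(1-z)$. It starts from the stiff limit $\bar z=0$, where all roots lie inside when $|\mu|<1$. A root would then have to reach $|\zeta|=1$ at some intermediate $\bar z_1$, which puts $\mu$ on $\Gamma_{z_1}$.

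You instead read off $\sigma_z=1$ and apply Koto's implication (a)$\Rightarrow$(b). That is a single Rouch\'e step: on $|\zeta|=1$ we have $|z\mu\sigma(\zeta)|<|z\sigma(\zeta)|\le|\rho(\zeta)-z\sigma(\zeta)|$. Hence the characteristic polynomial has as many roots inside as $\zeta^m(\rho-z\sigma)$, namely all $m+2$.

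Your version is shorter. It also avoids the paper's rather loose step of extending the root count from $\mu=0$ to all of $D_z$, and it is exactly the mechanism the paper itself uses later for Theorem~\ref{thm:fixed-off-grid-stability}. Its price is that A-stability of $\rho-z\sigma$ for $z<0$ enters as a hypothesis; this is true here, e.g.\ by the Jury conditions. The continuation from $\bar z=0$ needs, besides the same bound, only the Schur property of $\sigma$ (Lemma~\ref{lemma:denominator}). You also make the inclusion $D\subseteq D(0,1)$ explicit, whereas the paper takes it from the design of the family.

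Two small corrections.
\begin{enumerate}
\item The factorization should read $\rho(\zeta)=(\zeta-1)(\zeta-1+2\beta_0)$, not $(\zeta-1)(\zeta+1-2\beta_0)$. This is harmless, because you only use $\rho(1)=0$ and the definition of $\Gamma_z$.
\item Proposition~\ref{prop_connected_comps} only says that winding numbers are constant on components, not that they differ between components. You would have to compute them: $m$ inside the circle traced by $e^{\mathrm i m\theta}$, and $0$ outside. It is simpler to note that, since $\sigma^*=\sigma$, we have $P_{m,\infty}(\mu,\zeta)=\sigma(\zeta)(\zeta^m-\mu)$, whose remaining roots have modulus $|\mu|^{1/m}$. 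This gives $D_{-\infty}=D(0,1)$ directly, without relying on the proposition $\tilde D_z=D_z$, whose proof in the paper is only sketched.
\end{enumerate}
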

\begin{proof}
    We already know the identity $D_{-\infty}=D(0,1)$, as it was argued above and it constitutes the basis of the method's design. Hence we need to see the inclusion $D_{-\infty}\subseteq D_z$ $\forall z<0$.
    \newline
    Consider the following polynomial
    \begin{equation}
        P(\zeta,\bar{z}):=\zeta^m \left(\dfrac{\bar{z}}{1-\bar{z}}\rho(\zeta)+\sigma(\zeta)\right)-\mu \sigma^*(\zeta)
    \end{equation}

and fix a $z\in \mathbb{R}_{<0}\cup\{-\infty\}$, then the corresponding stability region is determined by
\begin{equation}
    D_z=\left\{\mu \in \mathbb{C}:P\left(\zeta,\dfrac{1}{1-z}\right)=0 \Rightarrow|\zeta|<1\right\}.
\end{equation}
Using the argument principle, we obtain the following equivalences:
\begin{equation*}
        \mu \in D_z \iff \scalebox{1.05}{$P\left( \zeta,\frac{1}{1-z} \right)$} \text{ has $m+2$ roots in $D(0,1)$ }\iff \left\{\begin{array}{lcc}
            P\left( \zeta,\frac{1}{1-z} \right) \neq 0 \;\; \forall|\zeta|=1, \\
             m+2=F\left( \frac{1}{1-z}  \right), 
        \end{array}
        \right.
    \end{equation*}
where
\begin{equation}
    F(\bar{z}):=\dfrac{1}{2\pi i}\int_{|\zeta|=1} \dfrac{P_\zeta(\zeta,\bar{z})}{P(\zeta,\bar{z})}d\zeta, \quad P_\zeta(\zeta,\bar{z}) := \dfrac{\partial P(\zeta,\bar{z})}{\partial \zeta}.
\end{equation}
It is possible to verify the second condition $F\big(\frac{1}{1-z}\big)=m+2$ $\forall m\in \mathbb{N}$, $\forall z<0$, for $\mu=0$, and therefore by applying \textbf{Proposition} \ref{prop_connected_comps} this can be extended $\forall \mu \in D_z$. 
\newline
To demonstrate that the first condition $P\left( \zeta,\frac{1}{1-z} \right) \neq 0 \;\; \forall|\zeta|=1$ is satisfied, let us suppose by contradiction that $\exists \mu \in D_{-\infty}$ such that $\mu \notin D_{z_0}$ for some $z_0 \in \mathbb{R}_{<0}$. Observe that $\frac{1}{1-z}<1$ $\forall z<0$ and $P(\zeta,\bar{z})$ is continuous $\forall \bar{z}<1$, in consequence there will be a $\bar{z}_1 \in \left(0,\frac{1}{1-z_0}\right]$ such that $P(\zeta,\bar{z}_1)=0$ for some $|\zeta|=1$. Equivalently, $\mu \in \Gamma_{z_1}$ for some $z_1\in \mathbb{R}_{<0}$ with $z_1<z_0$.
\newline
Furthermore, as $\mu$ belongs to $\Gamma_{z_1}$, its module can be written as
\begin{equation}
        |\mu|=\left|1+\frac{\big( 1-e^{i\theta} \big)\big(-1+2\beta_0+e^{i\theta}\big)}{\big( \beta_0+e^{i\theta}(\beta_0-1)+e^{2i\theta} \big)z_1}  \right|,\; -\pi \le \theta \le \pi,\; 0<\beta_0\le1.
        \label{mod_mu}
    \end{equation}
    To conclude with a contradiction, we just need to check that $|\mu|\ge 1$ (thus $\mu \notin D_{-\infty}$).
    \newline
    By using the following property 
    \begin{equation*}
        |\omega_1+\omega_2|^2=|\omega_1|^2+|\omega_2|^2+2\Re{(\omega_1 \bar{\omega_2})},\quad \text{for any }\omega_1,\omega_2\in \mathbb{C},
    \end{equation*}
    where $\bar{\omega}$ denotes the conjugate of $\omega$, our expression \eqref{mod_mu} becomes
    \begin{equation*}
        |\mu|^2=1+\left| \frac{\big( 1-e^{i\theta} \big)\big(-1+2\beta_0+e^{i\theta}\big)}{\big( \beta_0+e^{i\theta}(\beta_0-1)+e^{2i\theta} \big)z_1} \right|^2+2\Re{\left(\frac{\big( 1-e^{-i\theta} \big)\big(-1+2\beta_0+e^{-i\theta}\big)}{\big( \beta_0+e^{-i\theta}(\beta_0-1)+e^{-2i\theta} \big)z_1} \right)}.
    \end{equation*}
    Hence, we need the third term to be positive, and as $z_1<0$, we end up with
    \begin{equation*}
         \Re{\left( \frac{\big( 1-e^{-i\theta} \big)\big(-1+2\beta_0+e^{-i\theta}\big)}{ \beta_0+e^{-i\theta}(\beta_0-1)+e^{-2i\theta}}\right)} =\left( \frac{4(\beta_0-1)\sin^4(\frac{\theta}{2})}{ 1+\beta_0(\beta_0-1)+(\beta_0^2-1)\cos(\theta)+\beta_0\cos(2\theta)}\right)\le 0,
    \end{equation*}
    for $-\pi \le \theta \le \pi,\; 0<\beta_0<1$. The numerator is trivially non-negative, so we will check that the denominator remains always positive. A simple analysis of its critical points lets us conclude with this.
    \newline
    Finally, we obtain $|\mu|^2\ge1$, reaching the aforementioned contradiction.
\end{proof}
Regarding the application of the method to the general problem \eqref{prob_matricial_general}, and as a consequence of the previous theorem, we have the following result:
\begin{corollary}   
    Assume there exists a $p\in\mathbb{R}$ such that $F_p \subseteq D(0,1)$. Then every method derived from the family \eqref{b0_family} applied to the problem \eqref{prob_matricial_general} is unconditionally stable.
    \label{Theorem_FOV_uncond_stability}
\end{corollary}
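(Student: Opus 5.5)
The plan is to reduce the matrix recurrence \eqref{Metodo.retardo.IMEX.matricial} to the scalar characteristic equation \eqref{char_equation_FOV}, and then invoke Theorem~\ref{Theorem_unconditional_stab}. By the criterion of \cite{LMM-Seib-Ros} (Proposition 2), it suffices to show that $T(\zeta)$ is nonsingular for every $|\zeta|\ge1$, for every $h>0$ and every $m\ge1$.

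We argue by contradiction. Suppose that for some $h>0$ and $m$ there are $\zeta$ with $|\zeta|\ge1$ and $v\neq0$ such that $T(\zeta)v=0$.

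First, multiply on the left by the positive definite matrix $(-A)^{p-1}$ and then take the inner product with $v$. Exactly as in the derivation leading to \eqref{def_z_mu}–\eqref{char_equation_FOV}, this produces $z=-h\langle v,(-A)^pv\rangle/\langle v,(-A)^{p-1}v\rangle$, which is a finite negative real number because $-A$ is positive definite. It also produces $\mu=\langle v,(-A)^{p-1}Bv\rangle/\langle v,(-A)^pv\rangle$. After the normalisation $\hat v=v/\sqrt{\langle v,(-A)^pv\rangle}$, we get $\mu\in F_p\subseteq D(0,1)$.

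Dividing by $\langle v,(-A)^{p-1}v\rangle>0$ shows that $\zeta$ is a root of the scalar characteristic polynomial $P(\mu,z,\zeta)$ in \eqref{char_equation}, with $\beta_j^*=\beta_j$. One point needs care here: the factor $1/h$ in $T$ combines with $h$ in $z$ to give exactly $\zeta^m(\rho(\zeta)-z\sigma(\zeta))+z\mu\sigma(\zeta)=0$.

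By Theorem~\ref{Theorem_unconditional_stab}, $D(0,1)=D\subseteq D_z$ for every $z<0$ and every $m\ge1$. Since $\mu\in D(0,1)$, every root of $P(\mu,z,\cdot)$ satisfies $|\zeta|<1$. This contradicts $|\zeta|\ge1$, so $T(\zeta)$ is nonsingular on $|\zeta|\ge1$ and the method is stable. Because this holds for every $h>0$ (and $z$ ranges over all of $\mathbb{R}_{<0}$ as $h$ varies), the stability is unconditional.

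The main obstacle is checking that the scalar reduction is legitimate for all $p\in\mathbb{R}$. The matrices $(-A)^{p}$ and $(-A)^{p-1}$ must be well defined and positive definite, which follows from the spectral decomposition of the Hermitian negative definite $A$. We must also verify that the identity $\mu\in F_p$ does not depend on which scaling of $v$ is used, which holds because $\mu$ is a homogeneous ratio.

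One further point needs attention. $F_p$ is closed, so the hypothesis $F_p\subseteq D(0,1)$, with $D$ the open disk, gives $|\mu|<1$ strictly. The boundary case $|\mu|=1$, where Theorem~\ref{Theorem_unconditional_stab} gives no information, therefore never arises. Finally, the block form of $W$ in \eqref{matrix.W}, assumed there for $m\ge3$, is only needed to motivate $T$. The nonsingularity criterion applies for all $m\ge1$ with the zero-coefficient convention.
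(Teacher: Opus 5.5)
Your proposal is correct and is essentially the paper's argument. The paper also reduces $T(\zeta)v=0$ to the scalar equation \eqref{char_equation_FOV} by left multiplication with $(-A)^{p-1}$ and the inner product with $v$, obtaining $z\in F(hA)\subset\mathbb{R}_{<0}$ and $\mu\in F_p\subseteq D(0,1)$, and then concludes from the inclusion $D(0,1)\subseteq D_z$ given by Theorem~\ref{Theorem_unconditional_stab}; you simply make the contradiction and the algebra explicit, and your appeal to closedness of $F_p$ is unnecessary, since $|\mu|<1$ already follows from $\mu\in F_p\subseteq D(0,1)$ with $D(0,1)$ open.
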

\begin{proof}
    A direct consequence of \textbf{Theorem \ref{Theorem_unconditional_stab}} is
    \begin{equation*}
        D(0,1)\subseteq\bigcap\limits_{z\in \mathbb{R}_{<0}\cup\{-\infty\}}D_z\subseteq\bigcap\limits_{z\in F(hA)}D_z.
    \end{equation*}
    These inclusions let us conclude with the proof, as $\mu \in F_p\subseteq D(0,1)$ implies that $\mu\in D_z$ $\forall z\in F(hA)$.
\end{proof}
Finally, recalling what we deduced at the end of the last section, we can assure that any method of this family, applied to \eqref{prob_matricial_general}, will be unconditionally stable if the asymptotic stability conditions \eqref{cond_fov_asint_estab} hold true and $A^{-1}B$ is a normal matrix.


\section{Numerical stability for off-grid and variable delays}
\label{s4:non-constant-delay}

In this section, we will distinguish a constant off-grid delay from a genuinely variable delay. The former provides one autonomous recurrence with a constant $u \neq 0$, whereas the latter gives a sequence of step matrices whose products determine stability.

\subsection{  Numerical stability when the delay is constant but $u \neq 0$  }

First, we will deal with the case $\tau=(m+u)h$, with $m\ge1$ and $u\in [0,1)$.  Thus,
$t_{n+j}-\tau=t_{n-m+j}-uh$. Bellen and Zennaro studied the smooth delayed problem, $y'(t)=f\bigl(t,y(t),y(t-\tau(t))\bigr)$, where $f$ is Lipschitz continuous in its state arguments, $\tau(t)\ge \tau_*>0$, and the mesh contains the derivative-discontinuity points relevant to order $p$.  Let the underlying stable ODE formula have order $p$, and let $\eta_h$ be the continuous reconstruction used to evaluate delayed terms.

 If, when a step is started with exact data and exact past history, this reconstruction satisfies the uniform local estimate
\[
 \max_{t_n\le t\le t_{n+1}}
 || y(t)-\widehat\eta_{n+1}(t)||=O(h_n^{r}),
\]
then, for sufficiently small $h=\max_n h_n$ (in particular, with no within-step overlap), the resulting DDE method satisfies
\[
 \max_n  ||y(t_n)-y_n||
 +\max_{t_0\le t\le T}  ||y(t)-\eta_h(t)||
 =O\!\left(h^{\min\{p,r\}}\right).
\]
Thus a uniform reconstruction error $O(h^p)$ is a standard mesh-independent sufficient condition for preserving the global order $p$ of the discrete formula, see \cite[Th.~4.1.2]{bellen-zennaro-DDE}.

In what follows, we will assume that there are no discontinuities in the DDE. Define the three quadratic Lagrange basis functions
$U_s:[0,1)\to\mathbb{R}$, $s=0,1,2$, by
\begin{equation*}
 U_0(u)=\frac{(u-1)(u-2)}{2},\qquad
 U_1(u)=-u(u-2),\qquad
 U_2(u)=\frac{u(u-1)}{2}.
\end{equation*}
The quadratic backward interpolant
\begin{equation*}
 \mathcal I_u y_q:=U_0(u)y_q+U_1(u)y_{q-1}+U_2(u)y_{q-2}
\end{equation*}
approximates $y(t_q-uh)$. The delayed terms of the method
\eqref{IMEX_general} are therefore approximated by
\begin{equation}
    \sum\limits_{j=0}^k \alpha_j y_{n+j}=h \sum\limits_{j=0}^k \beta_j f(t_{n+j},y_{n+j})+h \sum\limits_{j=0}^{k}\beta^*_j g(t_{n-m+j},\frac{1}{2}(u-1)(u-2)y_{n-m+j}-u(u-2)y_{n-m-1+j}+\frac{1}{2}u(u-1)y_{n-m-2+j}).
    \label{IMEX_interpolado}
\end{equation}
For sufficiently smooth solutions, the interpolation error is
\(O(h^3)\), so it does not reduce the second order of the formula.
\newline
By applying \eqref{IMEX_interpolado} to the problem \eqref{prob_matricial_general}, the following result is obtained
\begin{equation}
    \sum\limits_{j=0}^k \alpha_j y_{n+j}=h\sum\limits_{j=0}^k \beta_j A y_{n+j}+h\sum\limits_{j=0}^{k}\beta_j^*B (\frac{1}{2}(u-1)(u-2)y_{n-m+j}-u(u-2)y_{n-m-1+j}+\frac{1}{2}u(u-1)y_{n-m-2+j}).
    \label{IMEX.interpolado.matricial}
\end{equation}
For $m\ge1$, set
\begin{equation*}
    Y_n:=(y_{n+k},y_{n+k-1},\dots,y_{n-m-1})^T
    \in \mathbb{C}^{(k+m+2)d}.
\end{equation*}
Then $Y_n=W(u)Y_{n-1}$, where
\begin{equation*}
 W(u)=\begin{pmatrix}
 M^{-1}C_1(u)&M^{-1}C_2(u)&\cdots&M^{-1}C_{k+m+2}(u)\\
 I&0&\cdots&0\\
 &\ddots&\ddots&\vdots\\
 0&&I&0
 \end{pmatrix},
\end{equation*}
with $M=\alpha_kI-h\beta_kA$ and
\begin{equation*}
 C_r(u)=h\beta_{k-r}A-\alpha_{k-r}I
 +h\sum_{\ell=0}^2U_\ell(u)\beta^*_{k+m+\ell-r}B,
 \qquad r=1,\dots,k+m+2,
\end{equation*}
using again zero coefficients outside $\{0,\dots,k\}$. This is the
augmented matrix associated with the chosen convention. 
\newline
Analogously to the procedure in \S \ref{sec:2}, we arrive at the next result: for each eigenvalue $\zeta$ of W there will exist an associated eigenvector V with the form
\begin{equation*}
    V=\big(\zeta^{k+m+2}v,\zeta^{k+m+1}v,\dots,\zeta v,v\big)^T \in \mathbb{C}^{(k+m+3)d},
\end{equation*}
with $v\in\mathbb{C}^d$ verifying $T_u(\zeta)v=0$ and
\begin{equation}
    T_u(x)=x^{m+2}\left( \frac{1}{h}\rho(x)I-\sigma(x)A \right)-\sigma^*(x)p_u(x)B,
    \qquad p_u(x):=U_0(u)x^2+U_1(u)x+U_2(u).
    \label{Ec_T(x)_interp}
\end{equation}

\subsection{Stability of the $\beta_0$-familiy for $u\not=0$}
Let us now focus on the previously designed family \eqref{b0_family} with the aforementioned interpolation.
For $u\in[0,1)$, the quadratic Lagrange interpolant defined above is
\begin{equation}
\begin{split}
  \mathcal I_u y_j
  ={}& \frac{(u-1)(u-2)}{2}\,y_j
       -u(u-2)\,y_{j-1}
       +\frac{u(u-1)}{2}\,y_{j-2} .
\end{split}
\label{eq:quadratic-interpolation}
\end{equation}
It approximates $y(t_{j}-uh)$ with an error of order $h^3$, and
therefore does not reduce the order of an order-two method.  For a
modal sequence $y_j=\xi^j$, its symbol is
\begin{equation}
  \mathcal I_u y_j=\xi^{j-2}p_u(\xi),\qquad
  p_u(\xi):=\frac{(u-1)(u-2)}{2}\xi^2-u(u-2)\xi+\frac{u(u-1)}{2}.
  \label{eq:interpolation-symbol}
\end{equation}

Hence, the characteristic equation can be written as
\begin{equation}
  \xi^{m+2}\bigl(\rho(\xi)-z\sigma(\xi)\bigr)
  +z\mu\,\sigma^*(\xi)p_u(\xi )=0.
  \label{eq:characteristic-equation}
\end{equation}

 Equation \eqref{eq:characteristic-equation} is the scalar reduction of
\eqref{Ec_T(x)_interp}; the interpolation factor $p_u$ is the only new
factor in comparison with \eqref{char_equation_FOV}.
For fixed $z\ne0$, define the boundary locus
\begin{equation}
  \Pi_{z,u}
  :=\left\{
  \mu_u(\xi)
  =-\frac{\xi^{m+2}\bigl(\rho(\xi)-z\sigma(\xi)\bigr)}
  {z\sigma^*(\xi)p_u(\xi )}
  : |\xi|=1
  \right\},
  \label{eq:boundary-locus}
\end{equation}
where values for which the denominator vanishes are interpreted as
points at infinity, and let
\begin{equation}
  \sigma_{z,u}:=\inf\bigl\{|\mu|:\mu\in\Pi_{z,u}\bigr\}.
  \label{eq:boundary-radius}
\end{equation}

\begin{lemma}
\label{thm:quadratic-interpolation}
For every $u\in[0,1)$ and every $\xi\in\C$ with $|\xi|=1$,
\begin{equation}
  |p_u(\xi)|\le 1.
  \label{eq:symbol-bound}
\end{equation}
More precisely, if $\xi=e^{\mathrm i\theta}$, then
\begin{equation}
  |p_u(e^{\mathrm i\theta})|^2
  =1-u(2-u)(1-u)^2(1-\cos\theta)^2.
  \label{eq:exact-modulus}
\end{equation}
Consequently, for every fixed $z$ for which the boundary loci are
defined,
\begin{equation}
  |\mu_u(\xi)|\ge |\mu_0(\xi)|
  \qquad (|\xi|=1),
  \label{eq:pointwise-comparison}
\end{equation}
and hence
\begin{equation}
  \sigma_{z,u}\ge \sigma_{z,0}.
  \label{eq:radius-comparison}
\end{equation}
\end{lemma}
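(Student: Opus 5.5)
The plan is to prove the exact identity \eqref{eq:exact-modulus} by direct expansion, and then to read off \eqref{eq:symbol-bound}, \eqref{eq:pointwise-comparison} and \eqref{eq:radius-comparison} from it.

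\textbf{Step 1 (setting up the expansion).} Write $p_u(\xi)=a\xi^2+b\xi+c$ with real coefficients
\[
a=\tfrac{(u-1)(u-2)}{2},\qquad b=-u(u-2),\qquad c=\tfrac{u(u-1)}{2}.
\]
These satisfy $a+b+c=1$, since the Lagrange weights reproduce constants. For $\xi=e^{\mathrm i\theta}$ the coefficients are real, so
\[
|p_u(e^{\mathrm i\theta})|^2=a^2+b^2+c^2+2(ab+bc)\cos\theta+2ac\cos 2\theta .
\]

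\textbf{Step 2 (rewriting in terms of $x=1-\cos\theta$).} I will use $a^2+b^2+c^2=(a+b+c)^2-2(ab+bc+ca)=1-2(ab+bc+ca)$. Set $x=1-\cos\theta$, so that $1-\cos2\theta=2\sin^2\theta=4x-2x^2$. Substituting gives
\[
|p_u|^2=1-2(ab+bc)x-2ac(4x-2x^2)=1-\bigl(2b(a+c)+8ac\bigr)x+4ac\,x^2 .
\]

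\textbf{Step 3 (the key cancellation).} This is the only nontrivial point: the linear term must vanish. First compute $a+c=\tfrac{(u-1)\bigl((u-2)+u\bigr)}{2}=(u-1)^2$. Then
\[
b(a+c)=-u(u-2)(u-1)^2,\qquad 4ac=u(u-1)^2(u-2),
\]
so $b(a+c)+4ac=0$. What remains is $4ac\,x^2=-u(2-u)(1-u)^2(1-\cos\theta)^2$, which is exactly \eqref{eq:exact-modulus}. For $u\in[0,1)$ we have $u(2-u)(1-u)^2\ge0$, and \eqref{eq:symbol-bound} follows.

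\textbf{Step 4 (comparison of the boundary loci).} For $u=0$ we have $p_0(\xi)=\xi^2$, and $\mu_0(\xi)$ coincides with the locus \eqref{Gamma_z} of the aligned case. Comparing with \eqref{eq:boundary-locus},
\[
\mu_u(\xi)=\mu_0(\xi)\,\frac{\xi^2}{p_u(\xi)} .
\]
Since $|\xi|=1$, this gives $|\mu_u(\xi)|=|\mu_0(\xi)|/|p_u(\xi)|\ge|\mu_0(\xi)|$ whenever $p_u(\xi)\neq0$. If $p_u(\xi)=0$, then $\mu_u(\xi)$ is a point at infinity, consistent with the stated convention, and the inequality is trivial. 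The requirement that the loci be defined (for instance, $\sigma^*=\sigma$ nonvanishing on the circle for the $\beta_0$-family, by Lemma~\ref{lemma:denominator}) guarantees that $\mu_0$ is finite.

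\textbf{Step 5 (the radii).} Taking the infimum over $|\xi|=1$ in \eqref{eq:pointwise-comparison} gives $\sigma_{z,u}\ge\sigma_{z,0}$, which is \eqref{eq:radius-comparison}.

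The main obstacle I expect is purely the bookkeeping in Step 3. Choosing the variable $x=1-\cos\theta$ together with $a+b+c=1$ is what makes the cancellation transparent, so I would set up the computation in exactly that form.
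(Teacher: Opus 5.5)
Your proposal is correct and follows the paper's proof: you expand $|p_u(e^{\mathrm i\theta})|^2$ using the real coefficients, reduce it to the stated identity, and then compare the loci through $|\mu_u(\xi)|=|\mu_0(\xi)|/|p_u(\xi)|$ before taking the infimum. Your use of $a+b+c=1$ and $x=1-\cos\theta$ to show that the linear term cancels just spells out the algebra that the paper compresses into one line.
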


\begin{proof}
Write
\[
  a_u=\frac{(u-1)(u-2)}{2},\qquad
  b_u=-u(u-2),\qquad
  c_u=\frac{u(u-1)}{2}.
\]
Since these coefficients are real, for $w=e^{\mathrm i\theta}$ we have
\begin{align*}
  |p_u(e^{\mathrm i\theta})|^2
  & =a_u^2+b_u^2+c_u^2
      +2b_u(a_u+c_u)\cos\theta
      +2a_uc_u\cos(2\theta) \\
  & =1-u(2-u)(1-u)^2(1-\cos\theta)^2.
\end{align*}
Because $0\le u<1$, all the factors in
\[
  1-|p_u(e^{\mathrm i\theta})|^2
  =u(2-u)(1-u)^2(1-\cos\theta)^2
\]
are nonnegative.  This proves \eqref{eq:symbol-bound}.

Now let $|\xi|=1$.  Since $|\xi^{m+2}|=1$, equation
\eqref{eq:boundary-locus} gives
\begin{equation}
  |\mu_u(\xi)|
  =\frac{|\rho(\xi)-z\sigma(\xi)|}
  {|z\sigma^*(\xi)|\,|p_u(\xi )|}.
  \label{eq:mu-modulus}
\end{equation}
For $u=0$, $p_0(\xi)=\xi^2$, and therefore
$|p_0(\xi )|=1$.  The estimate \eqref{eq:symbol-bound} thus yields
\[
  |\mu_u(\xi)|
  \ge
  \frac{|\rho(\xi)-z\sigma(\xi)|}{|z\sigma^*(\xi)|}
  =|\mu_0(\xi)|.
\]
At a zero of $p_u(\xi )$, the left-hand side is understood as
$+\infty$, so the same comparison remains valid.  Taking the infimum
over the unit circle proves \eqref{eq:radius-comparison}.
\end{proof}

\begin{theorem}[Fixed off-grid stability]
\label{thm:fixed-off-grid-stability}
Let \(0<\beta_0<1\), \(m\ge 1\), \(u\in[0,1)\), and
\(z\in\mathbb R_{<0}\cup\{-\infty\}\). If \(|\mu|<1\), then every
root of \eqref{eq:characteristic-equation}, interpreted through its
normalized limit for \(z=-\infty\), belongs to \(D(0,1)\).
Consequently, quadratic interpolation does not reduce the stability
disk \(D(0,1)\) for a fixed non-grid-aligned delay.
\end{theorem}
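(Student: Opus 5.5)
The plan is to argue directly with Rouch\'e's theorem on the unit circle, comparing the characteristic polynomial \eqref{eq:characteristic-equation} with its $\mu=0$ part. Since $\beta_j^*=\beta_j$ for the $\beta_0$-family, we have $\sigma^*=\sigma$. Write
\[
 Q(\xi):=\xi^{m+2}\bigl(\rho(\xi)-z\sigma(\xi)\bigr),\qquad
 R(\xi):=z\mu\,\sigma(\xi)p_u(\xi),
\]
so that \eqref{eq:characteristic-equation} reads $Q+R=0$. For finite $z<0$ the leading coefficient of $\rho-z\sigma$ is $1-z\neq0$, so $Q$ has exact degree $m+4$. Also $\deg R\le 4<m+4$, hence $Q+R$ has exactly $m+4$ roots. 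It therefore suffices to show two things: (i) all $m+4$ roots of $Q$ lie in $D(0,1)$, and (ii) $|R(\xi)|<|Q(\xi)|$ on $|\xi|=1$. Rouch\'e's theorem then puts all $m+4$ roots of $Q+R$ in $D(0,1)$.

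The key inequality comes from the proof of Theorem~\ref{Theorem_unconditional_stab}. There it is shown, via the sign computation of the real part (numerator $4(\beta_0-1)\sin^4(\theta/2)\le0$, denominator positive), that the grid-aligned boundary locus satisfies $|\mu_0(\xi)|\ge1$ for every $|\xi|=1$ and every $z<0$. Here $\mu_0(\xi)$ is the locus point of \eqref{eq:boundary-locus} with $u=0$. Since $|\xi^{m+2}|=1$, this is equivalent to
\[
 |\rho(\xi)-z\sigma(\xi)|\ \ge\ |z|\,|\sigma(\xi)|\qquad(|\xi|=1).
\]
By Lemma~\ref{lemma:denominator}, $\sigma(\xi)\neq0$ on the circle, so the right-hand side is strictly positive.

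Combining this with Lemma~\ref{thm:quadratic-interpolation} ($|p_u(\xi)|\le1$) and $|\mu|<1$ gives
\[
 |R(\xi)|=|z||\mu||\sigma(\xi)||p_u(\xi)|<|z||\sigma(\xi)|\le|Q(\xi)|
\]
on $|\xi|=1$, which is (ii). This also shows that $Q$ has no zeros on the circle.

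For (i), the factor $\xi^{m+2}$ contributes roots at $0$. The two roots of $\rho-z\sigma$ must also lie strictly inside the disk. A-stability gives $|\zeta|\le1$, and strictness follows because $\rho-z\sigma$ does not vanish on $|\xi|=1$. Alternatively, one can use continuity in $z$: by the inequality above no root can cross the circle as $z$ ranges over $(-\infty,0)$, and as $z\to-\infty$ the roots tend to those of $\sigma$, which is Schur by Lemma~\ref{lemma:denominator}.

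For $z=-\infty$, the normalized polynomial is $\xi^{m+2}\sigma(\xi)-\mu\sigma(\xi)p_u(\xi)$. The same Rouch\'e comparison applies, now with $|\xi^{m+2}\sigma(\xi)|=|\sigma(\xi)|>|\mu||\sigma(\xi)||p_u(\xi)|$ on the circle. The dominant term $\xi^{m+2}\sigma$ has all $m+4$ zeros in $D(0,1)$ by Lemma~\ref{lemma:denominator}. The final sentence of the theorem then follows since $|\mu|<1$ was arbitrary, for every $u\in[0,1)$ and $m\ge1$. Equivalently, $\sigma_{z,u}\ge\sigma_{z,0}\ge1$ by \eqref{eq:radius-comparison}.

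The main obstacle is the pointwise inequality $|\rho-z\sigma|\ge|z||\sigma|$, i.e.\ the positivity of the denominator $1+\beta_0(\beta_0-1)+(\beta_0^2-1)\cos\theta+\beta_0\cos2\theta$, which the earlier proof only sketched. If needed, I would verify it by writing it as $|\sigma(e^{i\theta})|^2$, which is positive by Lemma~\ref{lemma:denominator}.
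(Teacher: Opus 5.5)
Your proposal is correct and follows essentially the same route as the paper. Both apply Rouch\'e's theorem on $|\xi|=1$ against $\xi^{m+2}(\rho-z\sigma)$, using $|p_u|\le1$ together with $|\rho(\xi)-z\sigma(\xi)|\ge|z||\sigma(\xi)|$, and handle $z=-\infty$ by the same comparison with $\sigma$ Schur by Lemma~\ref{lemma:denominator}. The only difference is where the key inequality comes from: the paper derives it from $\operatorname{Re}\bigl(\rho(\xi)\overline{\sigma(\xi)}\bigr)\ge0$, attributed to A-stability, while you take it from the computation in Theorem~\ref{Theorem_unconditional_stab}. These are the same fact, since $\operatorname{Re}\bigl(\rho\overline{\sigma}\bigr)=2(1-\beta_0)(1-\cos\theta)^2$ on the unit circle. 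One small correction: the denominator you plan to check equals $\tfrac12|\sigma(e^{i\theta})|^2$, not $|\sigma(e^{i\theta})|^2$, which does not affect positivity.
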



\begin{proof}
For finite \(z<0\), let
\[
 R_z(\xi)=\rho(\xi)-z\sigma(\xi).
\]
Since the underlying multistep formula is $A$-stable, its boundary
locus satisfies
$\operatorname{Re}(\rho(\xi)\overline{\sigma(\xi)})\ge0$ for
$|\xi|=1$. Hence
\[
\begin{aligned}
 |R_z(\xi)|^2-|z\sigma(\xi)|^2
 &=|\rho(\xi)|^2
 -2z\operatorname{Re}\!\left(
 \rho(\xi)\overline{\sigma(\xi)}\right)\ge0.
\end{aligned}
\]
Therefore, using \(|p_u(\xi)|\le1\) and \(|\mu|<1\),
\[
 |z\mu\sigma(\xi)p_u(\xi)|
 <|\xi^{m+2} R_z(\xi)|,\qquad |\xi|=1.
\]
Rouch\'e's theorem shows that the characteristic polynomial has the
same number of roots in \(D(0,1)\) as \(\xi^{m+2} R_z(\xi)\). The latter
has \(m+2\) roots at zero and the two stable roots of the underlying
\(A\)-stable formula, so all \(m+4\) roots are interior.

For \(z=-\infty\), the normalized polynomial is
\[
 \sigma(\xi)\bigl(\xi^{m+2}-\mu p_u(\xi)\bigr).
\]
The polynomial \(\sigma\) is Schur, and Rouch\'e's theorem applied to
\(\xi^{m+2}-\mu p_u(\xi)\) completes the proof.
\end{proof}

\subsection{  Variable delays  }

At every grid point write, with the same convention as above,
\begin{equation*}
 \tau_\ell=(m_\ell+u_\ell)h,\qquad
 m_\ell=\left\lfloor\frac{\tau_\ell}{h}\right\rfloor\ge1,
 \qquad 0\le u_\ell<1,
\end{equation*}
and define
\begin{equation*}
 \mathcal I_\ell y:=\sum_{r=0}^2U_r(u_\ell)y_{\ell-m_\ell-r}.
\end{equation*}
Thus, in a two-step formula the three delayed values are reconstructed
with $\mathcal I_n y$, $\mathcal I_{n+1}y$, and
$\mathcal I_{n+2}y$, respectively; in general, the delayed term at
stage $j$ in \eqref{IMEX_interpolado} is
$g(t_{n+j},y_{n+j},\mathcal I_{n+j}y)$.

If \(A=A^*<0\) and
\(F_p(A,B)\subset D(0,1)\) for some \(p\in\mathbb R\), the
field-of-values reduction and
Theorem~\ref{thm:fixed-off-grid-stability} prove stability for every
fixed admissible pair \((m,u)\).  However, for a genuinely variable delay, the three delayed evaluations in a
two-step formula generally use different pairs
\[
 (m_n,u_n),\qquad(m_{n+1},u_{n+1}),\qquad(m_{n+2},u_{n+2}).
\]
Assume \(0<\tau_{\min}\le\tau_\ell\le\tau_{\max}\), fix \(h\), and
embed all required history values in a fixed-dimensional vector
\(X_n\). The homogeneous recurrence can then be written as
\begin{equation}
 X_{n+1}=W(\vartheta_n;h)X_n,
 \label{eq:variable-augmented-recurrence}
\end{equation}
where \(\vartheta_n\) contains now the three stage-dependent delay
parameters and any other varying coefficients. Consequently, stability
is governed by products of the matrices \(W(\vartheta_n;h)\), not only by
the spectra of the individual frozen matrices.

\begin{theorem}[Common quadratic Lyapunov certificate]
\label{thm:common-lyapunov}
Let \(\mathcal W_h\) denote the family of all admissible step matrices.
If there exist \(P_h=P_h^*\succ0\) and \(q_h\in(0,1)\) such that
\[
 W^*P_hW\preceq q_h^2P_h
 \qquad\text{for every }W\in\mathcal W_h,
\]
then every admissible variable-delay sequence is uniformly
exponentially stable and
\[
 \|X_n\|_{P_h}
 \le q_h^{\,n-n_0}\|X_{n_0}\|_{P_h}.
\]
\end{theorem}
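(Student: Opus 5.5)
The argument is a direct one-step contraction estimate in the energy norm $\|X\|_{P_h}:=\langle X,P_hX\rangle^{1/2}$, followed by induction along the nonautonomous recurrence \eqref{eq:variable-augmented-recurrence}. Since $P_h=P_h^*\succ0$, this is a genuine norm on the fixed augmented space, equivalent to the Euclidean norm with constants $\lambda_{\min}(P_h)^{1/2}$ and $\lambda_{\max}(P_h)^{1/2}$. The point of the hypothesis is that a single $P_h$ works for every admissible $W\in\mathcal W_h$. This uniformity is exactly what replaces the frozen spectral conditions of Theorem~\ref{thm:fixed-off-grid-stability}, which say nothing about products of step matrices.

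First, fix any admissible sequence $(\vartheta_n)_{n\ge n_0}$ and write $W_n:=W(\vartheta_n;h)\in\mathcal W_h$. For every $X$, the Lyapunov inequality gives
\[
\|W_nX\|_{P_h}^2=\langle X,W_n^*P_hW_nX\rangle\le q_h^2\langle X,P_hX\rangle=q_h^2\|X\|_{P_h}^2 .
\]
Taking square roots, and using $q_h>0$, yields $\|X_{n+1}\|_{P_h}\le q_h\|X_n\|_{P_h}$. Induction on $n-n_0$ then gives $\|X_n\|_{P_h}\le q_h^{\,n-n_0}\|X_{n_0}\|_{P_h}$. This bound holds uniformly in the sequence, because the same $P_h$ and $q_h$ serve every admissible $W$.

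Second, to state uniform exponential stability in the Euclidean norm, I will convert using the norm equivalence:
\[
\|X_n\|\le\kappa(P_h)^{1/2}q_h^{\,n-n_0}\|X_{n_0}\|,
\]
where $\kappa(P_h)=\lambda_{\max}(P_h)/\lambda_{\min}(P_h)$. The constant here is independent of $n_0$ and of the admissible delay sequence, which is precisely uniform exponential stability.

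The only possible obstacle is making sure the recurrence really evolves in one fixed space, so that a single $P_h$ is meaningful. This is guaranteed by the assumption $\tau_\ell\le\tau_{\max}$: with it, the embedding into a fixed-dimensional vector $X_n$ is well defined, and every delayed index $\ell-m_\ell-r$ lies within a bounded window of past values. The induction itself involves no difficulty.
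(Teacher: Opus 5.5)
Your proof is correct and matches the paper's: the one-step estimate $\|X_{n+1}\|_{P_h}^2=X_n^*W(\vartheta_n;h)^*P_hW(\vartheta_n;h)X_n\le q_h^2\|X_n\|_{P_h}^2$, followed by iteration. Your extra conversion to the Euclidean norm, with constant $\kappa(P_h)^{1/2}$, is a valid supplement that makes the ``uniform exponential stability'' claim explicit, which the paper leaves implicit.
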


\begin{proof}
At each step,
\[
 \|X_{n+1}\|_{P_h}^2
 =X_n^*W(\vartheta_n;h)^*P_hW(\vartheta_n;h)X_n
 \le q_h^2\|X_n\|_{P_h}^2.
\]
Iteration gives the stated estimate.
\end{proof}

Unfortunately, stability of every ``frozen recurrence'' alone does not imply stability of the products.  As we can check in the next examples, in practice, in many numerical problems unconditional stability of the method for any frozen $u \in [0,1)$ provides stability for large time steps, however, a  Lyapunov certificate (or a similar technique) should be verified for every step size  to demonstrate stability without any doubts.

\subsection{Example: linear system where matrices simultaneously diagonalize}
    Consider the following problem:
    \begin{equation}
    \left\{
    \begin{array}{cc}
         y'(t)=Ay(t)+s(t)By(t-g(t))+\tilde{f}(t),& t\ge 0,  \\
         y(t)=\left(\begin{gathered}
    \cos\left(\frac{t}{25}\right) \\
    \sin^2\left(\frac{t}{10}\right)-e^{-\frac{t}{50}} \\
    2+\frac{t}{100}
  \end{gathered}
              \right), & t\le 0, 
    \end{array}
    \right.
    \label{test1}
\end{equation}
\textit{where the matrices and the functions s(t) and g(t) are defined as}
\begin{equation*}
    A=\begin{pmatrix}
        -200&0&0  \\
         340&-30&0  \\
         150&-20&-10
    \end{pmatrix},
    \quad
    B=\begin{pmatrix}
        -158&0&0 \\
        276&-20&0 \\
        127.99&-10.01&-9.99
    \end{pmatrix}, \quad s(t)=1-e^{-t}, \quad g(t)=3/4+\cos(t)/2,
\end{equation*}
\textit{and  $\tilde{f}(t)$ is such that } $y(t)=\left(\cos(\frac{t}{25}),\sin^2(\frac{t}{10})-e^{-\frac{t}{50}},2+\frac{t}{100}\right)^T$
\textit{ is the solution to the problem.}

\textbf{Solution.} $\tilde{f}(t)$ is determined by $\tilde{f}(t)=y'(t)-Ay(t)-s(t)By(t-g(t))$:
\begin{equation}
\begin{aligned}
\tilde{f}(t)=\Bigg(
&200 \cos\left(\frac{t}{25}\right) - (1 - e^{-t}) \left(0. - 158. \cos\left(\frac{1}{25} \left(-\frac{3}{4} + t - \frac{\cos(t)}{2}\right)\right)\right) - \frac{1}{25} \sin\left(\frac{t}{25}\right), \\[10pt]
&\frac{e^{-\frac{t}{50}}}{50} - 340 \cos\left(\frac{t}{25}\right) + \frac{1}{5} \cos\left(\frac{t}{10}\right) \sin\left(\frac{t}{10}\right) + 30 \left(-e^{-\frac{t}{50}} + \sin^2\left(\frac{t}{10}\right)\right) \\
&\quad - (1 - e^{-t}) \left(0. + 276. \cos\left(\frac{1}{25} \left(-\frac{3}{4} + t - \frac{\cos(t)}{2}\right)\right) - 20. \left(-e^{\frac{1}{50} \left(\frac{3}{4} - t + \frac{\cos(t)}{2}\right)} + \sin^2\left(\frac{1}{10} \left(-\frac{3}{4} + t - \frac{\cos(t)}{2}\right)\right)\right)\right), \\[10pt]
&\frac{1}{100} + 10 \left(2 + \frac{t}{100}\right) - 150 \cos\left(\frac{t}{25}\right) + 20 \left(-e^{-\frac{t}{50}} + \sin^2\left(\frac{t}{10}\right)\right) \\
&\quad - (1 - e^{-t}) \bigg(-9.99 \left(2 + \frac{1}{100} \left(-\frac{3}{4} + t - \frac{\cos(t)}{2}\right)\right) + 127.99 \cos\left(\frac{1}{25} \left(-\frac{3}{4} + t - \frac{\cos(t)}{2}\right)\right) \\
&\quad - 10.01 \left(-e^{\frac{1}{50} \left(\frac{3}{4} - t + \frac{\cos(t)}{2}\right)} + \sin^2\left(\frac{1}{10} \left(-\frac{3}{4} + t - \frac{\cos(t)}{2}\right)\right)\right)\bigg)\Bigg)^T.
\end{aligned}
\end{equation}

By applying the interpolated method \eqref{IMEX.interpolado.matricial} to this problem and choosing $\beta_0=1/2$, we end up with:
\begin{equation}
\begin{aligned}
    y_{n+2}-y_{n+1}={}&h\bigg( A(y_{n+2}-\frac{1}{2}y_{n+1}+\frac{1}{2}y_{n})
    +\frac{1}{2}s(t_n)B\mathcal I_n y
    -\frac{1}{2}s(t_{n+1})B\mathcal I_{n+1}y  \\
    +  & s(t_{n+2})B\mathcal I_{n+2}y   +\frac{1}{2}\widetilde f(t_n)  -\frac{1}{2}\widetilde f(t_{n+1})
    +\widetilde f(t_{n+2})\bigg).
    \end{aligned}
\end{equation}

To ensure $m_\ell\ge1$ at every stage, it is enough to impose
$h\le\tau_{\min}$. Since the minimum value of the delay is
$\tau_{\min}=0.25$, the no-overlap condition is $h\le0.25$.

In Table \ref{Table:error_test1}, the errors related to each component of the solution in the last iteration are presented.

\begin{table}[h]
\centering
\renewcommand{\arraystretch}{1.3} 
\begin{tabularx}{\textwidth}{|c|*{3}{>{\centering\arraybackslash}X}|}
\hline
\textit{h} & \multicolumn{3}{c|}{\textit{\textbf{Interpolated IMEX}}} \\
\hline
0.5  & $1.793 \cdot 10^{-5}$ & $1.126 \cdot 10^{-4}$ & $3.071 \cdot 10^{-4}$ \\
\hline
0.2  & $4.111 \cdot 10^{-9}$ & $1.740 \cdot 10^{-6}$ & $2.689 \cdot 10^{-6}$ \\
\hline
0.08  & $2.338 \cdot 10^{-10}$ & $2.754 \cdot 10^{-7}$ & $4.153 \cdot 10^{-7}$ \\
\hline
0.05 & $8.854 \cdot 10^{-11}$ & $1.058 \cdot 10^{-7}$ & $1.581 \cdot 10^{-7}$ \\
\hline
0.02 & $4.958 \cdot 10^{-12}$ & $1.736 \cdot 10^{-8}$ & $2.725 \cdot 10^{-8}$ \\
\hline
\makecell{Conv.\\Ratio} & \multicolumn{3}{c|}{1.97351} \\
\hline
\end{tabularx}

\caption{Example test: Errors committed in the last iteration for every component of the solution and the convergence ratio}
\label{Table:error_test1}
\end{table}

The convergence ratio has been defined as:
\begin{equation}
    \log_{\frac{h_1}{h_2}}\left(\dfrac{err_{h_1}}{err_{h_2}}\right),
    \label{convergence_ratio}
\end{equation}
where $h_1=0.08$ and $h_2=0.02$, both below the abovementioned threshold.

\subsection{Example: linear system where the delay depends on the solution}

    Consider the following problem:
    \begin{equation}
    \left\{
    \begin{array}{cc}
         y'(t)=Ay(t)+By(t-g(t,y))+\tilde{f}(t),& t\ge 0,  \\
         y(t)=\left(\begin{gathered}
    \sin\left(\frac{t}{5}\right) \\
    \frac{\cos\left(\frac{t}{3}\right)}{1+\left( \frac{t}{10}\right)^2} \\
    2+\sin\left(\frac{t}{10}\right)
  \end{gathered}
              \right), & t\le 0, 
    \end{array}
    \right.
    \label{test1var}
\end{equation}
\textit{where the matrices and the functions s(t) and g(t) are defined as}
\begin{equation*}
    A=\begin{pmatrix}
        -4&1&0  \\
         1&-4&1  \\
         0&1&-4
    \end{pmatrix},
    \quad
    B=\begin{pmatrix}
        0&1/2&0 \\
        0&0&1/2 \\
        0&0&0
    \end{pmatrix}, \quad  g(t)=2+\sin(y_3(t)),
\end{equation*}
\textit{being $y_3$ the third component of the solution vector $y$ and  $\tilde{f}(t)$ such that } $y(t)=\left(\sin\left(\frac{t}{5}\right), \frac{\cos\left(\frac{t}{3}\right)}{1+\left( \frac{t}{10}\right)^2},2+\sin\left(\frac{t}{10}\right)\right)^T$
\textit{ is the solution to the problem.}

\textbf{Solution. } Analogously as in the previous example,  $\tilde{f}(t)=y'(t)-Ay(t)-By(t-g(t,y(t)))$:
\begin{equation}
\begin{aligned}
\tilde{f}(t) = \bigg(&\frac{1}{5} \cos\left(\frac{t}{5}\right) - \frac{\cos\left(\frac{t}{3}\right)}{1 + \frac{t^2}{100}} + 4 \sin\left(\frac{t}{5}\right) - \frac{0.5 \cos\left(\frac{1}{3}\left(t - 2 - \sin\left(2 + \sin\left(\frac{t}{10}\right)\right)\right)\right)}{1 + \frac{1}{100}\left(t - 2 - \sin\left(2 + \sin\left(\frac{t}{10}\right)\right)\right)^2}, \\[0.8em]
&-2 - \frac{t \cos\left(\frac{t}{3}\right)}{50 \left(1 + \frac{t^2}{100}\right)^2} + \frac{4 \cos\left(\frac{t}{3}\right)}{1 + \frac{t^2}{100}} - \sin\left(\frac{t}{10}\right) - \sin\left(\frac{t}{5}\right) - \frac{\sin\left(\frac{t}{3}\right)}{3 \left(1 + \frac{t^2}{100}\right)} - 0.5 \left(2 + \sin\left(\frac{1}{10}\left(t - 2 - \sin\left(2 + \sin\left(\frac{t}{10}\right)\right)\right)\right)\right), \\[0.8em]
&\frac{1}{10} \cos\left(\frac{t}{10}\right) - \frac{\cos\left(\frac{t}{3}\right)}{1 + \frac{t^2}{100}} + 4 \left(2 + \sin\left(\frac{t}{10}\right)\right)\bigg)^T
\end{aligned}
\end{equation}

Again, by applying the interpolated method \eqref{IMEX.interpolado.matricial} to this problem and now choosing $\beta_0=1/5$, we end up with:
\begin{equation}
\begin{aligned}
    y_{n+2}-\frac{8}{5}y_{n+1}+\frac{3}{5}y_n={}&h\bigg( A(y_{n+2}-\frac{4}{5}y_{n+1}+\frac{1}{5}y_{n})
    +\frac{1}{5}B\mathcal I_n y
    -\frac{4}{5}B\mathcal I_{n+1}y
    +B\mathcal I_{n+2}y     +\frac{1}{5}\widetilde f(t_n)-\frac{4}{5}\widetilde f(t_{n+1})
    +\widetilde f(t_{n+2})\bigg).
    \end{aligned}
\end{equation}

In this case, as $\tau\ge1$ $\forall t$, it suffices to take the following upperbound for the stepsize $h \le 1$.
In Table \ref{Table:error_test2}, the errors related to each component of the solution in the last iteration are presented.

\begin{table}[h]
\centering
\renewcommand{\arraystretch}{1.3} 
\begin{tabularx}{\textwidth}{|c|*{3}{>{\centering\arraybackslash}X}|}
\hline
\textit{h} & \multicolumn{3}{c|}{\textit{\textbf{Interpolated IMEX}}} \\
\hline
0.5  & $1.419 \cdot 10^{-5}$ & $3.434 \cdot 10^{-5}$ & $6.993 \cdot 10^{-5}$ \\
\hline
0.2  & $3.885 \cdot 10^{-6}$ & $4.132 \cdot 10^{-6}$ & $1.110 \cdot 10^{-5}$ \\
\hline
0.08  & $1.013 \cdot 10^{-6}$ & $5.691 \cdot 10^{-7}$ & $1.769 \cdot 10^{-6}$ \\
\hline
0.05 & $4.339 \cdot 10^{-7}$ & $2.137 \cdot 10^{-7}$ & $6.902 \cdot 10^{-7}$ \\
\hline
0.02 & $7.550 \cdot 10^{-8}$ & $3.280 \cdot 10^{-8}$ & $1.103 \cdot 10^{-7}$ \\
\hline
\makecell{Conv.\\Ratio} & \multicolumn{3}{c|}{1.95698} \\
\hline
\end{tabularx}

\caption{Example test: Errors committed in the last iteration for every component of the solution and the convergence ratio}
\label{Table:error_test2}
\end{table}

The stepsizes used in the computation of the convergence ratio \eqref{convergence_ratio} are $h_1=0.2$ and $h_2=0.02$. 
\newline
The convergence ratio is reasonably similar to the first example and close to 2, verifying the order of consistency \eqref{order_consistency}.

\section{Numerical simulations}
\label{sec:numer-simul}
\subsection{Fisher-Kolmogorov-Petrovsky-Piskunov equation}

Let us consider the FKPP equation with $q=2$:

\begin{equation}
    \dfrac{\partial u}{\partial t}=\lambda \dfrac{\partial^2 u}{\partial x^2}+\mu u(t-\tau,x)(1-u(t-\tau,x)^2), \quad t\ge 0,\quad 0\le x \le 1,
    \label{Ec.Fisher-Kolmogorov-Petrovsky-Piskunov}
\end{equation}

where $\lambda >0, \mu>0$ and with the following boundary and initial conditions

\begin{equation*}
    \left\{
    \begin{array}{lcc}
         u(t,0)=u(t,1)=0,&  \\
         u(t,x)=\sin(\pi x),& t\le 0. 
    \end{array}
    \right.
\end{equation*}

\textbf{Solution.} We will approach this using the so called \textit{Method of lines}, which consists basically of discretizing the space variable x, ending up with a system of DDEs to which we apply the theory presented in the previous section.

First, let us define $\Delta x:=\frac{1}{N}$, and the mesh determined by $x_0=0<\dots<x_j=j\Delta x<\dots<x_N=1$. Denoting by $z^j(t)$ the approximation of $u(t,x_j)$, and the vector $z(t)=\big(z^1(t),\dots,z^{N-1}\big)^T$, then the problem \eqref{Ec.Fisher-Kolmogorov-Petrovsky-Piskunov} is reduced to
\begin{equation}
    z'(t)=Az(t)+g(z(t-\tau)),
    \label{Ec.EDPR_IMEXincond2_con_g}
\end{equation}
with
\begin{equation*}
    A=\dfrac{\lambda}{(\Delta x)^2}\begin{pmatrix}
        -2&1&0&\dots&0\\
        1&-2&1&\dots&0\\
        0&1&-2&\dots&0\\
        \vdots&\vdots&\vdots&\ddots&\vdots\\
        0&0&\dots&1&-2
    \end{pmatrix},
\end{equation*}

and the j-th component of $g(z(t-\tau))$ being

\begin{equation*}
    g_j(z(t-\tau))^T=\mu z^j(t-\tau) \big( 1-z^j(t-\tau)^2 \big).
\end{equation*}

The next step would be to transform equation \eqref{Ec.EDPR_IMEXincond2_con_g} into a matrix one, with the same form as \eqref{prob_matricial_general}, and thus use the results we stated.

We can directly see that, for every $j=1,\dots,N-1$, the component $g_j$ of $g(z(t-\tau))$ is differentiable and vanishes for $z\equiv$. Therefore, by the Mean Value Theorem, we can write $g_j(z(t-\tau))=g_j'(c_j)z^j(t-\tau)$, for some $c_j \in (0,z^j)$.

Consequently, we arrive at
\begin{equation*}
    z'(t)=Az(t)+B(c)z(t-\tau)
,
\end{equation*}
with $c=(c_1,\dots,c_{N-1})$, and 
\begin{equation*}
    B=\mu \begin{pmatrix}
        1-3c_1^2&0&\dots&0
        \\0&1-3c_2^2&\dots&0
        \\ \vdots&\vdots&\ddots&\vdots\\
        0&0&\dots&1-3c_{N-1}^2
    \end{pmatrix}.
\end{equation*}

In this sense, let us take $\beta_0=\frac{1}{2}$, then the matrix equation to deal with would be:
\begin{equation}
    z_{n+2}-z_{n+1}=h\Big(A(z_{n+2}-\frac{1}{2}z_{n+1}+\frac{1}{2}z_n)+B(c)(z_{n-m+2}-\frac{1}{2}z_{n-m+1}+\frac{1}{2}z_{n-m})\Big).
    \label{Ec.Fisher_metodo}
\end{equation}

Generally, A and B defined in this way do not commute, hence we will make use of the theory related to the Field of Values. If $\exists p\in \mathbb{R}$ such that $F_p \subseteq D(0,1)$, then by \textbf{Theorem \ref{Theorem_FOV_uncond_stability}} our method will be unconditionally stable. Thus, we need to study for which parameters the condition is satisfied.

Let us assume that for all n the following holds
\begin{equation}
    |z_{n-m}^j|\le 1,\quad j=1,\dots,N-1.
    \label{Ec.Fisher_cond_z_j}
\end{equation}
Then $-2\mu<g'_j(c_j)<4\mu$, and using the fact that B is Hermitian, we have that
\begin{equation*}
    F(B(c))=\left[\min\limits_j g'_j(c_j),\max\limits_j g'_j(c_j)\right]\subseteq(-2\mu,4\mu).
\end{equation*}

On the other hand, let us describe the so called Toeplitz matrix M, which has the following form

\begin{equation*}
    M=\begin{pmatrix}
        a&b&0&0&\dots&0\\
        c&a&b&0&\dots&0\\
        0&c&a&\ddots&&\vdots \\
        \vdots&&\ddots&\ddots&\ddots&0\\
        \vdots&&&\ddots&a&b\\
        0&\dots&\dots&0&c&a
    \end{pmatrix}_{N\cdot N},
\end{equation*}
and its eigenvalues are the following
\begin{equation*}
    \bar{\lambda_i}=a-2\sqrt{bc}\cos\Big( \frac{i\pi}{N+1}\Big),\quad i=1,\dots,N. 
\end{equation*}

Taking this into account, the eigenvalues of A will be
\begin{equation*}
    \lambda_j\in \sigma(A) \Longrightarrow \lambda_j=-\frac{2\lambda}{(\Delta x)^2}\left(1 - \cos\left(\frac{j\pi} {N}\right)\right).
\end{equation*}

Using the fact that the spatial stepsize is $\Delta x=\frac{1}{N}$, we have the following inclusion
\begin{equation*}
    \sigma(A)\subseteq\left[-2\lambda N^2\left(1 - \cos\left(\frac{\pi}{N}\right)\right),-2\lambda N^2\left(1 - \cos\left(\frac{(N-1)\pi}{N}\right)\right)\right].
\end{equation*}

From the fact that A is Hermitian and negative definite, it follows that $F(A)=Co(\sigma(A))$, and hence $F(A)$ will also be contained in the previous interval.

If condition \eqref{Ec.Fisher_cond_z_j} is verified, it is possible to follow an argument analogous to that of \cite{LMM-Seib} (see Proposition 6.1) to arrive at
\begin{equation*}
\begin{aligned}
    &F((-A)^{-1/2}B(c)(-A)^{1/2})=\{\langle v,B(c)v\rangle:  \langle v,Av\rangle=1,v\in \mathbb{C}^{N-1} \}
    \\&=\{\langle v,B(c)v\rangle:\langle v,v\rangle=\frac{1}{l},l\in F(A),v\in \mathbb{C}^{N-1}\}
    \\&\subseteq \left[-\frac{4\mu}{2\lambda N^2\left(1 - \cos\left(\frac{\pi}{N}\right)\right)},\frac{2\mu}{2\lambda N^2\left(1+\cos\left(\frac{\pi}{N}\right)\right)}\right].
    \end{aligned}
\end{equation*}
Furthermore, the condition $|z^j(t)|\le 1$ holds for $t\le 0$ and $i=1,\dots,N-1$. Therefore, we could prove by induction that if
\begin{equation*}
    \left|-\frac{4\mu}{4\lambda N^2 \left(1 - \cos\left(\frac{\pi}{N}\right)\right)}\right|<1 
\end{equation*}
is satisfied, then \eqref{Ec.Fisher_cond_z_j} is verified. Hence, the method \eqref{Ec.Fisher_metodo} will be unconditionally stable if
\begin{equation}
    \lambda>\frac{\mu}{N^2 \left(1 - \cos\left(\frac{\pi}{N}\right)\right)}
    \label{Cond_param_Ec_KPP}
\end{equation}
holds.

By taking, for example, $\lambda=1/2$, $\mu=3$, and $N=100$, we would ensure that the condition \eqref{Cond_param_Ec_KPP} is satisfied. Additionally, we assume the following values: the delay $\tau=1$ and the initial time $t_0=0$.

In Figs. \ref{fig:sol_u_EDPR_IMEX_incond2_uh01t2}-\ref{fig:sol_u_EDPR_IMEX_incond2_uh05t50},
we can observe the stability of the numerical solution for specific values of the stepsize h and final time $t_f$. 
\begin{figure}[H]
    \centering 
    \begin{subfigure}[b]{0.49\textwidth}
        \centering
        \includegraphics[width=\textwidth]{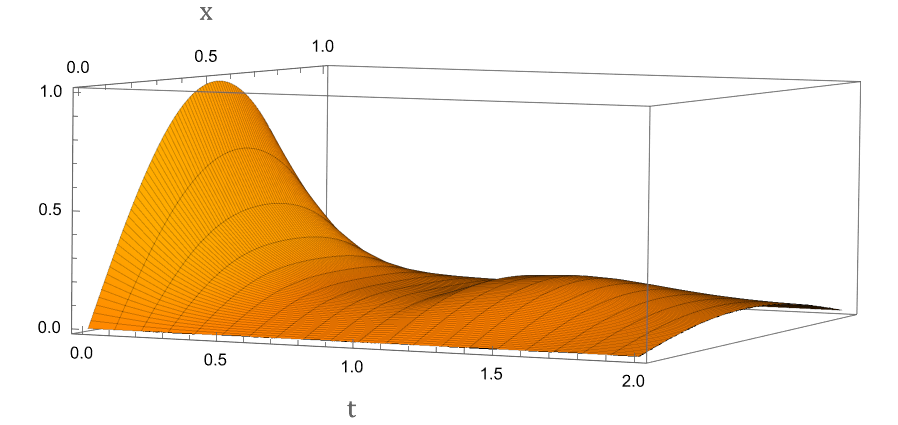}
        \caption{$u$ for $h=0.1,\;t_f=2$}
        \label{fig:sol_u_EDPR_IMEX_incond2_uh01t2}
    \end{subfigure}
    \hfill
    \begin{subfigure}[b]{0.49\textwidth}
        \centering
        \includegraphics[width=\textwidth]{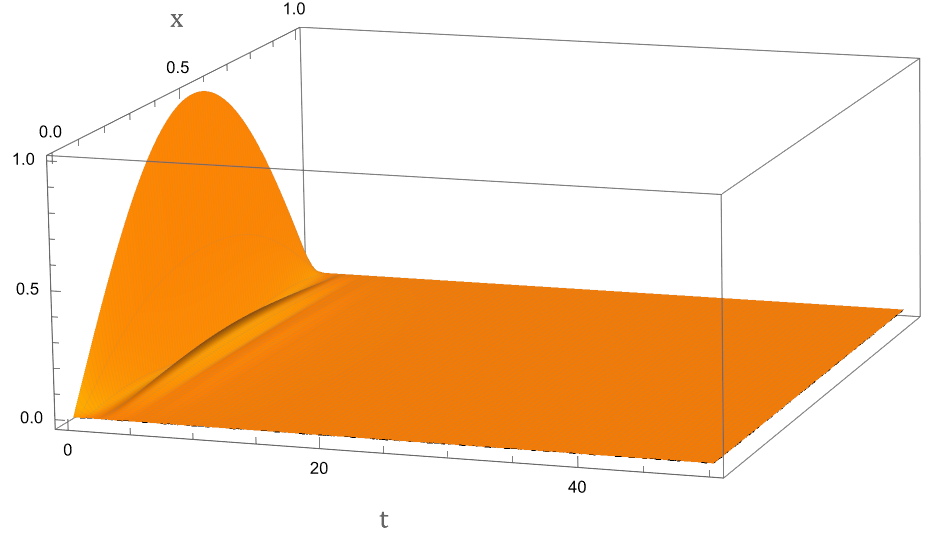}
        \caption{$u$ for $h=0.5,\;t_f=50$}
        \label{fig:sol_u_EDPR_IMEX_incond2_uh05t50}
    \end{subfigure}

    \caption{Numerical solution of the problem  \eqref{Ec.Fisher-Kolmogorov-Petrovsky-Piskunov}, given by the $\beta_0=1/2$ method and particular values of $h$ and $t_f$.}
    \label{fig:sol_u_EDPR_IMEX_incond2} 
\end{figure}

\subsection{Linear system of PDDEs with simultaneous diagonalization criterion}
\label{EDPR_mat_diag_simult}

Consider the following problem:
\begin{equation}
    \left\{
    \begin{array}{cc}
          \dfrac{\partial u_1}{\partial t}=a_1 \dfrac{\partial^2 u_1}{\partial x^2}-e^{l\frac{\pi}{2}}u_1(t-\tau,x)+\frac{1}{4}e^{l\frac{\pi}{2}}(4l+\pi^2)u_2(t-\tau,x) & t\ge 0, \\\\
          \dfrac{\partial u_2}{\partial t}=a_2 \dfrac{\partial^2 u_2}{\partial x^2}-e^{l\frac{\pi}{2}}u_2(t-\tau,x)-\frac{1}{4}e^{l\frac{\pi}{2}}(4l+\pi^2)u_1(t-\tau,x) & t\ge 0,
    \end{array}
    \right.
    \label{Problema_EDPR_lineal}
\end{equation}
for $-1\le x\le 1$, and the following boundary and initial conditions
\begin{equation*}
    \left\{
    \begin{array}{cc}
         u_1(t,-1)=u_1(t,1)=0,  \\
         u_2(t,-1)=u_2(t,1)=0,\\
         u_1(t,x)=\phi_1(t,x), \quad t\le 0,
         \\
         u_2(t,x)=\phi_2(t,x),\quad t\le 0,
    \end{array}
    \right.
\end{equation*}
being $a_1,a_2\in \mathbb{R}^+$, $l\in \mathbb{R}$, and $\phi_1(t,x),\phi_2(t,x)$ two given functions. 

\textbf{Solution. } In the theoretical framework Sec.\ref{sec:2} we assumed DDEs with respect to just the time variable t. Therefore, to utilize the mentioned results we will make use of the socalled Method of Lines.

This method is based on approximating the spatial derivatives in a finite number of spatial nodes. In this way, the space dependence is discretized, giving as a result a system of DDEs where just time remains as a variable.

Let $n\in \mathbb{N}$, $x_0=-1$, and $x_n=1$. Let us define $\Delta x=\dfrac{x_n-x_0}{n}=\dfrac{2}{n}$; thus, we build the mesh $x_0=-1 < \dots < x_j=x_0+j\cdot \Delta x < \dots < x_n=1$. In each of the nodes, we will use the second order centered formula for the second derivative:

\begin{equation*}
    \left.\dfrac{\partial^2 u_k(t,x)}{\partial x^2}\right|_{x=x_j}=\dfrac{u_{i,{j-1}}(t)-2u_j(t)+u_{j+1}(t)}{(\Delta x)^2}+\mathcal{O}((\Delta x)^2), \quad k=1,2.
\end{equation*}

Consequently, we arrive at:
\begin{equation*}
\left\{
\begin{aligned}
     \frac{\partial u_{1,j}(t)}{\partial t}=&a_1 \frac{u_{1,j-1}(t)-2u_{1,j}(t)+u_{1,j+1}(t)}{(\Delta x)^2}-e^{l\frac{\pi}{2}}u_{1,j}(t-\tau,x)+\frac{1}{4}e^{l\frac{\pi}{2}}(4l+\pi^2)u_{2,j}(t-\tau,x),  \\
     \frac{\partial u_{2,j}(t)}{\partial t}=&a_2 \frac{u_{2,j-1}(t)-2u_{2,j}(t)+u_{2,j+1}(t)}{(\Delta x)^2}-e^{l\frac{\pi}{2}}u_{2,j}(t-\tau,x)-\frac{1}{4}e^{l\frac{\pi}{2}}(4l+\pi^2)u_{1,j}(t-\tau,x),
\end{aligned}
\right.    
\end{equation*}
for $t\ge 0$ $-1\le x\le 1$, and the conditions
\begin{equation*}
    \left\{
    \begin{array}{lcc}
         u_{1,-1}(t)=u_{1,1}(t)=0,  \\
         u_{2,-1}(t)=u_{2,1}(t)=0, \\
         u_{1,j}(t)=\phi_1(t,x_j),\quad t\le 0, \\
         u_{2,j}(t)=\phi_2(t,x_j),\quad t\le 0,
    \end{array}
    \right.
\end{equation*}

Let us denote by $z_{k,j}(t)$, with $k=1,2$, the approximation of $u_k(t,x_j)$, then take the vector $z_k(t):=\big(z_{k,1}(t),\dots,z_{k,n-1}(t)\big)^T$, and the system is reduced to the following equation
\begin{equation}
    z'(t)=Az(t)+Bz(t-\tau),
    \label{Ec.reducida.test.EDPR}
\end{equation}
where $z(t)=\big(z_1(t),z_2(t)\big)$, and the matrices are defined as
\begin{equation*}
    A=\begin{pmatrix}
        a_1 T&0 \\
        0&a_2 T
    \end{pmatrix},
    \quad
    B=e^{l\frac{\pi}{2}}\begin{pmatrix}
        -I&\big( l+\frac{\pi^2}{4} \big)I \\
        -\big(l+\frac{\pi^2}{4}\big) I& -I
    \end{pmatrix},
\end{equation*}
being $I=I_{n-1}$ the $n-1$-dimensional identity matrix and $T$ is
\begin{equation*}
    T=\frac{1}{(\Delta x)^2}\begin{pmatrix}
        -2&1&0&0&\dots&0\\
        1&-2&1&0&\dots&0\\
        0&1&-2&\ddots&&\vdots \\
        \vdots&&\ddots&\ddots&\ddots&0\\
        \vdots&&&\ddots&-2&1\\
        0&\dots&\dots&0&1&-2
    \end{pmatrix}_{(n-1)(n-1)},
\end{equation*}
which is also called a \textit{Toeplitz} matrix. For a generic Toeplitz matrix 
\begin{equation*}
    M=\begin{pmatrix}
        a&b&0&0&\dots&0\\
        c&a&b&0&\dots&0\\
        0&c&a&\ddots&&\vdots \\
        \vdots&&\ddots&\ddots&\ddots&0\\
        \vdots&&&\ddots&a&b\\
        0&\dots&\dots&0&c&a
    \end{pmatrix}_{N\cdot N},
\end{equation*}
the eigenvalues $\bar{\lambda_i}$ are known and have the following form
\begin{equation*}
    \bar{\lambda_i}=a-2\sqrt{bc}\cos\Big( \frac{i\pi}{N+1}\Big),\quad i=1,\dots,N. 
\end{equation*}

Hence, the matrix A will be negative definite, as well as Hermitian, provided that $a_1,a_2>0$ is verified.

By applying \ref{b0_family} with $\beta_0=3/5$ to \eqref{Ec.reducida.test.EDPR} we obtain:
\begin{equation*}
    z_{n+2}-\frac{4}{5}z_{n+1}-\frac{1}{5}z_n=h\left(Az_{n+2}-\frac{2}{5}Az_{n+1}+\frac{3}{5}Az_n+Bz_{n+2-m}-\frac{2}{5}Bz_{n+1-m}+\frac{3}{5}Bz_{n-m} \right).
\end{equation*}

This solution vector $z(t)$ would be $(n-1)+(n-1)=2n-2$ dimensional, with its first half of components corresponding to $u_1$ and the second half to $u_2$, excluding the boundary nodes $x_0=-1,x_n=1$. 

The initial and boundary conditions will be set so that the solutions to our system are the following:
\begin{equation}
    \left\{
    \begin{array}{cc}
         u_1(t,x)=\phi_1(t,x)=e^{lt}\sin(t)\cos(\frac{\pi}{2}x),  \\
          u_2(t,x)=\phi_2(t,x)=e^{lt}\cos(t)\cos(\frac{\pi}{2}x).
    \end{array}
    \right.
    \label{sol_EDPR_IMEXcond2}
\end{equation}
As a consequence, the sign of the parameter $l$ will determine the behavior of the solutions; i.e., divergent solutions would appear for $l>0$ and damped solutions for $l<0$. The unconditional stability of our method ensures that the numerical solutions will inherit these characteristics.

By setting the parameters in the following way $a_1=1,\;a_2=1,\;l=-1/5,\;n=100$, it is possible to verify the following: the matrices $A,B$ commute, all the eigenvalues of $A$ have multiplicity 2 (due to $a_1=a_2$), and there are only two distinct eigenvalues of $B$, both with multiplicity $n-1=99$. 

\begin{figure}[h]
    \centering 
    \begin{subfigure}[b]{0.49\textwidth}
        \centering
        \includegraphics[width=\textwidth]{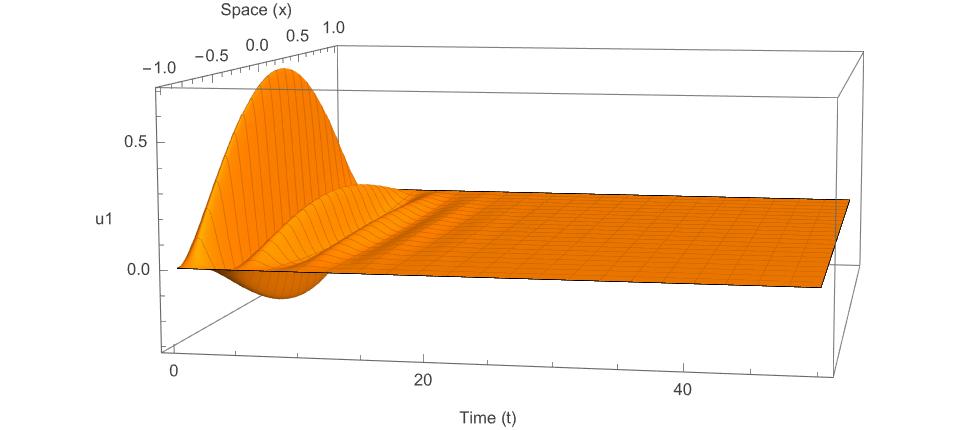}
    \end{subfigure}
    \hfill
    \begin{subfigure}[b]{0.49\textwidth}
        \centering
        \includegraphics[width=\textwidth]{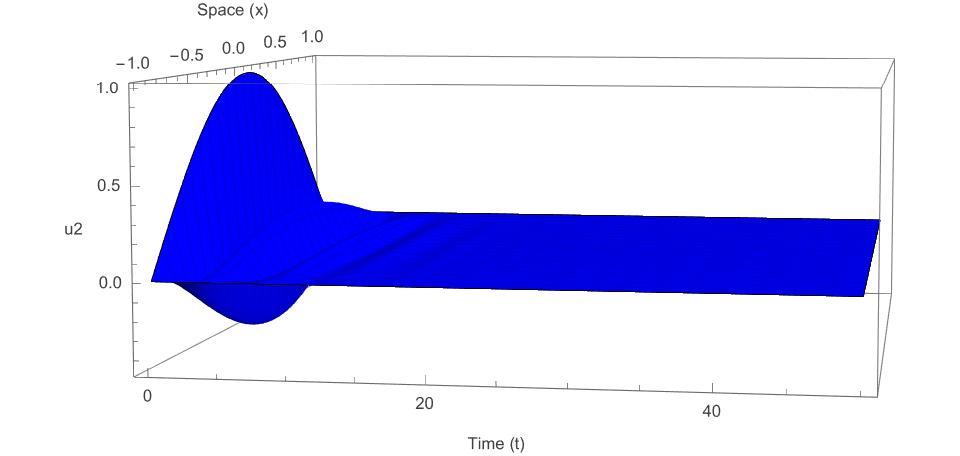}
    \end{subfigure}
    \caption{Numerical solutions $u_1$ (left) and $u_2$ (right) for $h=0.1$ and $t_f=50$.}
    \label{fig:sol_u1_u2_syst_PDDE} 
\end{figure}
In Figs. \ref{fig:sol_u1_u2_syst_PDDE} we can observe the damped oscillation behavior from both functions, directly related to the negative value of $l$.

To illustrate the utility of the concept of the Field of Values in problems like the one we are dealing with, Figs. \ref{fig:fov_pdde_2A_l-02} and \ref{fig:fov_pdde_2A_l02} show the FOV of $A^{-1}B$ for two cases of $l$. 
By using the result given in \textbf{Corollary} \ref{corol_FOV_stability}, we can see that in the asymptotically stable case ($l<0$), the numerical solutions preserve this property. In contrast, in the divergent case, the FOV of $A^{-1}B$ is not even contained in the unit disk.

\begin{figure}[H]
    \centering 
    \begin{subfigure}[b]{0.3\textwidth}
        \centering
        \includegraphics[width=\textwidth]{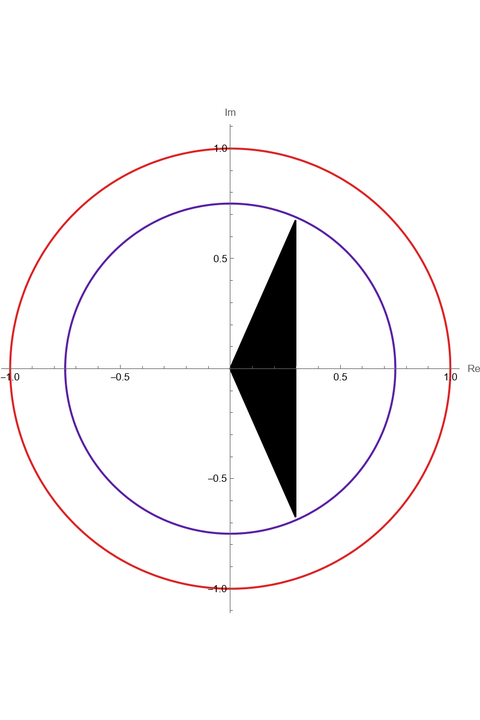}
        \caption{$l=-1/5$}
        \label{fig:fov_pdde_2A_l-02}
    \end{subfigure}
    \qquad \qquad
    \begin{subfigure}[b]{0.25\textwidth}
        \centering
        \includegraphics[width=\textwidth]{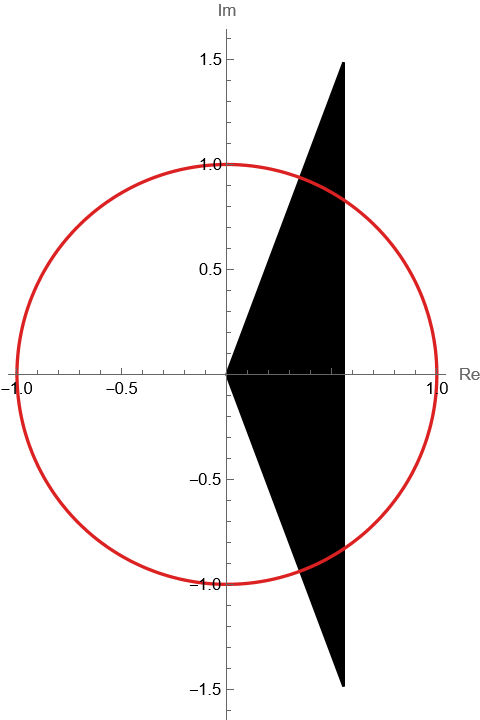}
        \caption{$l=1/5$}
        \label{fig:fov_pdde_2A_l02}
    \end{subfigure}

    \caption{Field of Values $F(A^{-1}B)$ for $n=100$ and some values of $l$. The red circle has radius 1, while the purple one has radius 0.75.}
    \label{fig:fov_pdde_2A} 
\end{figure}

\section{Conclusions and future challenges}
\label{sec:6}
The main focus of this paper was to study the stability of a novel $\beta_0$-family of IMEX multistep methods \eqref{b0_family} for the test problem $y'(t) = Ay(t) + By(t-\tau(t,y(t)))$, without assuming simultaneous diagonalizability of the matrices. By utilizing the field of values (FOV) of the matrix $A^{-1}B$, we provided a rigorous theoretical framework for asymptotic stability analysis.

For grid-aligned constant delays ($u = 0$), we have demonstrated via the argument principle that this family of second-order multistep methods is unconditionally stable, provided $A$ is Hermitian negative-definite and the FOV of $A^{-1}B$ lies within the complex unit disk. Furthermore, the applicability of this theoretical framework has been successfully extended to semi-discretized parabolic partial delay differential equations (PDDEs), as illustrated by the Fisher-Kolmogorov-Petrovsky-Piskunov (FKPP) model. In particular, the latter model has been used in population dynamics in areas such as epidemiology, ecology, or chemistry.

Additionally, another key contribution of this work is the introduction of time- or state-dependent delays, which alters the stability landscape. The necessity of using a continuous extension (specifically, a second-order Lagrange interpolant for off-grid delayed nodes, $u \neq 0$ ) introduces a causality condition.  For a constant delay whose delayed arguments do not coincide with the
grid, the quadratic interpolation symbol is nonamplifying on the unit
circle. Under the fixed-offset and no-overlap assumptions, the
interpolation therefore does not reduce the scalar stability disk.

For genuinely time- or state-dependent delays, the discrete problem is
nonautonomous because the delay index and interpolation weights may
vary at every step.   A complete variable-delay
result, therefore, requires a common Lyapunov estimate or another
product-level condition.

Natural directions for future work include common or
parameter-dependent Lyapunov certificates for the variable-delay
examples, nonlinear invariant-region estimates for nonlinear problems,
and extensions to non-Hermitian dissipative operators and higher-order
formulas.

\section*{Acknowledgments}
   The authors would like to thank  Prof. Giovanni Pagano and Gianluca Frasca Caccia,  for fruitful conversations.  This research was partly funded by  Fundaci\'on Sol\'orzano through the project FS/11-2024. 
\vspace{0.1cm}

\noindent
\small
\textbf{Data Availability} Data sharing is not applicable to this article as no datasets were generated or analyzed during the current study.

\noindent
\textbf{Code Availability} Selected code files used in this study (specifically, the symbolic computation of the LTE, the test for variable delays, and the FKPP problem) are publicly available on GitHub at \url{https://github.com/javiroguti/DDEs-IMEX-stability}. The remaining code is available from the corresponding author upon reasonable request.

\section*{Declarations}
\small
\textbf{Conflict of interest}
The authors declare that they have no conflict of interest.

\bibliographystyle{unsrt}
\bibliography{refs}

\end{document}